\newtheorem{thm}{Theorem}[section]
\newtheorem{lem}[thm]{Lemma}
\newtheorem{prop}[thm]{Proposition}
\theoremstyle{definition}
\theoremstyle{remark}
\newtheorem{rem}[thm]{Remark}
\numberwithin{equation}{section}
\newcommand{\Real}{\mathbb R}
\newcommand{\eps}{\varepsilon}
\newcommand{\diag}{\mathrm{diag}}
\newcommand{\F}{\mathscr{F}}
\newcommand{\s}{\mathbb{S}}
\newcommand{\one}[1]{\mathbf{1}_{\{#1\}}}
\renewcommand{\P}{\mathsf{P}}
\newcommand{\E}{\mathsf{E}}
\newcommand{\osimplex}{\mathcal{S}^{d-1}}
\newcommand{\csimplex}{\bar{\mathcal{S}}^{d-1}}
\newcommand{\argmax}{\mathrm{argmax}}
\newcommand{\M}{\mathcal{M}}
\newcommand{\K}{\mathbb{K}}
\begin{document}

\title[MLE for continuous time HMM]{Maximum Likelihood Estimator for Hidden Markov Models in continuous time}%
\author{Pavel Chigansky}%
\address{Department of Statistics,
The Hebrew University,
Mount Scopus, Jerusalem 91905
Israel }
\email{pchiga@mscc.huji.ac.il}

\thanks{This article has been written during the author's visit at
Laboratoire de Statistique et Processus, Universite du Maine, France,
supported by the Chateaubriand fellowship.}

\keywords{Maximum Likelihood Estimator, continuous time Hidden Markov Models,
partial observations,  filtering}%

\date{August, 25, 2007}%
\begin{abstract}
The paper studies large sample asymptotic properties of the Maximum
Likelihood Estimator (MLE) for the parameter of a continuous time
Markov chain, observed in white noise. Using the method of weak
convergence of likelihoods due to I.Ibragimov and R.Khasminskii
\cite{IH}, consistency, asymptotic normality and convergence of
moments are established for MLE under certain strong ergodicity
conditions on the chain.
\end{abstract}
\maketitle

\section{Introduction}

\subsection{The setting and the main result}
Consider a pair of continuous time random processes
$(S,X)=(S_t,X_t)_{t\ge 0}$, where $S$ is a {\em signal} Markov chain
with values in a finite real set $\s=\{a_1,...,a_d\}$ and $X$ is
given by
$$
X_t = \int_0^t h(S_r)dr + B_t,
$$
with an  $\s \mapsto \Real$ function $h$ and a Brownian motion $B$,
independent of $S$. Let $\Lambda=(\lambda_{ij})$,
$i,j\in\{1,...,d\}$ and $\nu$ be the transition rates and the
initial distribution of the chain respectively. Suppose the model,
i.e. $\Lambda$ and $h$, depend on a parameter $\theta\in
\varTheta$ with $\varTheta$, being a bounded open subset of $\Real^n$,
which is to be estimated given the observed trajectory $X^T=\{X_s,
0\le s\le T\}$.

In this paper we study the large sample asymptotic properties of the
Maximum Likelihood Estimator (MLE) $\hat \theta_T$ of $\theta$ given
$X^T$. For a fixed value of the parameter, let $\P_\theta$ denote
the probability measure, induced by $(S,X)$ on the corresponding function space
$D_{[0,\infty)}\times C_{[0,\infty)}$, and let $\F^X_t$ be the natural
filtration of $X$. Introduce the {\em filtering} process
$\pi^\theta=(\pi^\theta_t)_{t\ge 0}$ with values in the simplex of
probability vectors $\csimplex=\{x\in \Real^d: x_i\ge 0,
\sum_{i=1}^d x_i=1 \}$, whose entries are the conditional
probabilities $\big\{\pi^\theta_t\big\}_{i}:=
\P_\theta\big(S_t=a_i|\F^X_t\big)$. As is well known, the process
$$
\bar B_t = X_t - \int_0^t h^* \pi^{\theta}_tdt
$$
is the innovation Brownian motion with respect to $\F^X_t$ and  by
the Girsanov theorem the likelihood, i.e. the Radon-Nikodym
derivative of  $P_\theta$, restricted to $\F^X_T$, with respect to
the Wiener measure on $C_{[0,T]}$, is given by
$$
L_T(\theta;X^T):= \exp\left\{\int_0^T h^*\pi^\theta_t dX_t -\frac 1 2
\int_0^T \big(h^*\pi^\theta_t\big)^2 dt\right\},
$$
where $h$ is the vector with entries $h_i=h(a_i)$, $i=1,...,d$ and
$h^*$ denotes its transposed.
We shall define the MLE $\hat \theta_T$ as a maximizer of the
likelihood:
\begin{equation}\label{mle1}
\hat \theta_T : = \argmax_{\theta\in \bar{\varTheta}}L_T(\theta;X^T)
\end{equation}
where $\bar\varTheta$ stands for the closure of $\varTheta$. If
$\Lambda$ and $h$ are continuous in $\theta$,
$L_T(\theta;X^T)$ is a continuous function of $\theta$  on
$\bar\varTheta$ with probability one  and hence the maximum value is
attained, perhaps at multiple values of $\theta$, in which case
any  maximizer is chosen.

In fact, for any
$\theta,\eta\in \varTheta$ the restrictions of $\P_\theta$ and
$\P_\eta$ on $\F^X_T$ are equivalent (see e.g. \cite{LSI}) with the
corresponding likelihood
\begin{multline*}
L_T(\theta,\eta;X^T):=\frac{d\P_\theta}{d\P_\eta}(X)_{\big|\F^X_T}= \\
\exp\left\{\int_0^T \big(h^*\pi^\theta_t-h^*\pi^\eta_t\big)dX_t
-\frac 1 2 \int_0^T
\Big[\big(h^*\pi^\theta_t\big)^2-\big(h^*\pi^\eta_t\big)^2\Big]
dt\right\}
\end{multline*}
and for any $\eta\in \varTheta$
\begin{equation}
\label{mle2} \hat \theta_T  = \argmax_{\theta\in
\bar{\varTheta}}L_T(\theta,\eta;X^T).
\end{equation}
The latter expression is more convenient for the analysis purposes and,
in fact, we shall  work with \eqref{mle2}, fixing
$\eta:=\theta_0$, where $\theta_0$ is the actual (unknown) value of
the parameter. This choice is quite natural as we study
$\hat\theta_T$ under measure $\P_{\theta_0}$.

To simplify the presentation, we shall consider the case of  scalar
parameter, i.e. $\varTheta \subset \Real$, and, moreover, assume
that $h$ does not depend on $\theta$ (this issue is briefly addressed
in Section \ref{sec-dis}). Our main result is the following theorem.
\begin{thm}\label{thm}
Assume
\begin{enumerate}
\renewcommand{\theenumi}{a-\arabic{enumi}}

\item\label{a-1} $\lambda_{ij}(\theta)$ are twice continuously differentiable on $\bar\varTheta$ and
\begin{equation}\label{serg}
\min_{i\ne j}\min_{\theta\in \bar\varTheta}\lambda_{ij}(\theta)>0;
\end{equation}

\item\label{a-2} the model is identifiable in the sense that the function
$g(\theta_0,\theta):=\E_{\theta_0}\big(h^*\check{\pi}^\theta_0-h^*\check{\pi}^{\theta_0}_0\big)^2$,
where $(\check\pi^\theta_0, \check\pi^{\theta_0}_0)$ are random
vectors, sampled from the unique invariant measure
\footnote{$(\pi^\theta_t, \pi^{\theta_0}_t)$ is indeed a Markov
process and it has a unique invariant measure under \eqref{a-1} -
see Lemma \ref{lem-inv} below} of the Markov process $(\pi^\theta_t,
\pi^{\theta_0}_t)$ under $\P_{\theta_0}$, satisfies
\begin{equation}
\label{ident} \inf_{\theta_0\in \K}\inf_{|\theta-\theta_0|\ge r
}g(\theta_0,\theta)>0, \quad \forall r>0
\end{equation}
for any compact $\K\subset \varTheta$;
\item\label{a-3} the Fisher information\footnote{
here $\dot \pi^\theta_t := \frac{\partial }{\partial
\theta}\pi^\theta_t$ in the $\P_{\theta_0}$-a.s. sense: such
derivative exists, when $\theta\mapsto \Lambda(\theta)$ is
continuously differentiable. }
$$
I(\theta_0):= \lim_{T\to\infty}\frac{1}{T}\int_0^T \big(h^*\dot
\pi^{\theta_0}_t\big)^2dt
$$
is well defined (as the unique limit in $P_{\theta_0}$-probability),
is positive and, moreover,
$$
\lim_{t\to\infty} \E_{\theta_0}\big(h^*\dot
\pi^{\theta_0}_t\big)^2=I(\theta_0),
$$
uniformly on compacts $\K\subset \varTheta$.
\end{enumerate}
Then the MLE  $\hat\theta_T$ is uniformly consistent:
$$
\lim_{T\to\infty}\sup_{\theta_0\in\K}\P_{\theta_0}\Big(\big|\hat
\theta_T-\theta_0\big|\ge \eps\Big)=0, \quad \forall\eps>0,
$$
uniformly asymptotically normal, i.e.
$$
\lim_{T\to\infty}\sup_{\theta_0\in\K}\Big|\E_{\theta_0}f\Big(\sqrt{T}\big(\hat\theta_T-\theta_0\big)\Big)-
\E f(\xi)\Big|=0,\quad \forall f\in \mathcal{C}_b
$$
with a zero mean Gaussian random variable $\xi$, with variance  $1/I(\theta_0)$. Moreover, the
moments converge:
$$
\lim_{T\to\infty}\E_{\theta_0}\big|\sqrt{T}\big(\hat\theta_T-\theta_0\big)\big|^p=\E |\xi|^p,
\quad \forall p>0,
$$
uniformly over compacts $\K\subset\varTheta$.
\end{thm}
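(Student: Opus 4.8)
The plan is to verify the hypotheses of the general theorem of Ibragimov and Khasminskii \cite{IH} on the weak convergence of the normalized likelihood ratio process, which automatically yields consistency, asymptotic normality, and convergence of moments, together with the required uniformity over compacts $\K\subset\varTheta$. Concretely, fix $\theta_0\in\K$ and define the normalized log-likelihood process
\[
Z_T(u;\theta_0):=L_T\big(\theta_0+u/\sqrt T,\theta_0;X^T\big),\qquad u\in U_T:=\sqrt T(\bar\varTheta-\theta_0),
\]
expressed through \eqref{mle2}. One must establish three things: (i) finite-dimensional convergence of $Z_T(\cdot;\theta_0)$ to the Gaussian limit $Z(u)=\exp\{u\Delta\sqrt{I(\theta_0)}-\tfrac12 u^2 I(\theta_0)\}$ where $\Delta\sim N(0,1)$; (ii) a uniform Hölder-type bound $\E_{\theta_0}|Z_T^{1/2}(u_1)-Z_T^{1/2}(u_2)|^2\le C|u_1-u_2|^2$; and (iii) a uniform exponential decay estimate $\E_{\theta_0}Z_T^{1/2}(u)\le e^{-\kappa |u|^2}$ for $|u|$ large (and the analogous near-origin bound), with constants $C,\kappa$ independent of $\theta_0\in\K$. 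The identifiability condition \eqref{ident} is what feeds into (iii), the Fisher-information condition \eqref{a-3} drives (i), and the smoothness and strong ergodicity in \eqref{a-1} underlie the moment and continuity estimates.

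For step (i), I would use the Girsanov representation to write
\[
\log Z_T(u;\theta_0)=\int_0^T\big(h^*\pi^{\theta_0+u/\sqrt T}_t-h^*\pi^{\theta_0}_t\big)d\bar B_t-\frac12\int_0^T\big(h^*\pi^{\theta_0+u/\sqrt T}_t-h^*\pi^{\theta_0}_t\big)^2dt,
\]
where $\bar B$ is the innovation Brownian motion under $\P_{\theta_0}$. A first-order Taylor expansion of $\pi^\theta_t$ in $\theta$ about $\theta_0$ — legitimate by the differentiability in \eqref{a-1} — replaces $h^*\pi^{\theta_0+u/\sqrt T}_t-h^*\pi^{\theta_0}_t$ by $(u/\sqrt T)\,h^*\dot\pi^{\theta_0}_t$ plus a remainder that must be shown to be negligible in $L^2(\P_{\theta_0})$ uniformly in $\K$; the quadratic-variation term then converges, by \eqref{a-3} and the uniform-in-time convergence of $\E_{\theta_0}(h^*\dot\pi^{\theta_0}_t)^2$ together with a law-of-large-numbers argument for the Markov process $(\pi^\theta_t,\pi^{\theta_0}_t)$, to $\tfrac12 u^2 I(\theta_0)$, while the stochastic integral converges to $u\sqrt{I(\theta_0)}\,\Delta$ by a martingale central limit theorem (the quadratic variation of the martingale part is precisely $(u^2/T)\int_0^T(h^*\dot\pi^{\theta_0}_t)^2dt\to u^2 I(\theta_0)$). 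To control all remainders and to pass to the stationary regime I would invoke the exponential forgetting / strong ergodicity of the filter that follows from \eqref{serg}; presumably a lemma in the body of the paper (e.g. Lemma \ref{lem-inv} and its neighbours) supplies the geometric mixing of $(\pi^\theta_t,\pi^{\theta_0}_t)$ and the $L^p$-boundedness of $\dot\pi^{\theta_0}_t$ and $\ddot\pi^{\theta_0}_t$, uniformly in $\theta_0\in\K$, which is exactly what makes the uniformity in the final statement work.

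For step (ii), I would bound $\E_{\theta_0}|Z_T^{1/2}(u_1)-Z_T^{1/2}(u_2)|^2$ by first reducing (as in \cite{IH}, Ch.~I--III) to a bound on $\E_{\theta_0}\big(\log Z_T(u_1)-\log Z_T(u_2)\big)^2$ against the Hellinger-type functional, and then estimating $\E_{\theta_0}\int_0^T\big(h^*\pi^{\theta_1}_t-h^*\pi^{\theta_2}_t\big)^2dt$ for $\theta_i=\theta_0+u_i/\sqrt T$ by $C|u_1-u_2|^2$ via the mean value theorem and the uniform $L^2$-bound on $h^*\dot\pi^\theta_t$ over $\theta$ in a neighbourhood of $\K$; the $1/T$ from the square of $|u_1-u_2|/\sqrt T$ cancels the $T$ from the time integral. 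For step (iii), the large-deviation bound, I would use that $\E_{\theta_0}Z_T^{1/2}(u)=\E_{\theta_0}\exp\{-\tfrac18\int_0^T(h^*\pi^\theta_t-h^*\pi^{\theta_0}_t)^2dt\}$ (the stochastic-integral term disappears after taking expectation of the square root, up to the standard computation), and then bound $\tfrac1T\int_0^T(h^*\pi^\theta_t-h^*\pi^{\theta_0}_t)^2dt$ below, with $\P_{\theta_0}$-probability close to one, by $g(\theta_0,\theta)/2$ using ergodicity, whence \eqref{ident} gives the exponential decay in $T$ for $|\theta-\theta_0|\ge r$; translating back to the $u$-scale and splitting into the regime $|u|\le R\sqrt T$ (handled by the local quadratic behaviour coming from positivity of $I(\theta_0)$ in \eqref{a-3}) and $|u|> R\sqrt T$ (handled by \eqref{ident}) yields the required $e^{-\kappa|u|^2}$-type bound with $\kappa$ uniform over $\K$.

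The main obstacle I anticipate is not any single one of these estimates in isolation but making every one of them \emph{uniform over the compact $\K\subset\varTheta$} while simultaneously letting $\theta=\theta_0+u/\sqrt T$ vary in a shrinking neighbourhood of $\theta_0$: one needs a version of the ergodic theorem and of the geometric forgetting of the filter with rate constants that do not degenerate as $\theta_0$ ranges over $\K$, and one needs the convergence $\E_{\theta_0}(h^*\dot\pi^{\theta_0}_t)^2\to I(\theta_0)$ to be uniform both in $t$ and in $\theta_0$ — this is precisely why \eqref{a-3} was stated with that uniformity built in. Assembling these ingredients — the uniform mixing from \eqref{serg}, the uniform identifiability from \eqref{ident}, and the uniform Fisher-information convergence from \eqref{a-3} — into the Ibragimov–Khasminskii framework is the crux; once the three conditions (i)--(iii) hold with uniform constants, their general theorem delivers consistency, the asymptotic $N(0,1/I(\theta_0))$ law, and convergence of all moments, uniformly over $\K$, as claimed.
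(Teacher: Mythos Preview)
Your overall architecture matches the paper's exactly: verify the three Ibragimov--Khasminskii conditions for $Z_T(u)$, with \eqref{a-3} driving the finite-dimensional convergence, \eqref{a-1} supplying the H\"older modulus, and \eqref{ident} plus positivity of $I(\theta_0)$ feeding the tail bound. The identification of which assumption goes where is correct.

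There is, however, a real gap in your step~(iii). The identity you write,
\[
\E_{\theta_0}Z_T^{1/2}(u)=\E_{\theta_0}\exp\Big\{-\tfrac18\int_0^T\big(h^*\pi^{\theta}_t-h^*\pi^{\theta_0}_t\big)^2\,dt\Big\},
\]
is false here: it holds when the integrand is deterministic (the classical signal-in-white-noise model), but in the present setting $\delta_t:=h^*\pi^{\theta}_t-h^*\pi^{\theta_0}_t$ is itself an $\F^X_t$-adapted random process, so the stochastic exponential and $\exp\{-\tfrac18\int\delta_t^2\,dt\}$ are correlated and the Girsanov factor does not decouple. One can change measure to absorb the exponential martingale, but then the law of $\int_0^T\delta_t^2\,dt$ changes and your ``ergodic lower bound under $\P_{\theta_0}$'' no longer applies directly. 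Consequently, bounding the time average below with high $\P_{\theta_0}$-probability, as you propose, does not by itself yield the required decay of $\E_{\theta_0}Z_T^{1/2}(u)$.

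The paper circumvents this by not estimating $\E_{\theta_0}Z_T^{1/2}(u)$ directly at all. Instead it proves the sufficient polynomial tail bound
\[
\P_{\theta_0}\big(Z_T(u)\ge e^{-\kappa u^2/4}\big)\le \frac{C}{u^{2m}}\qquad\text{for every }m\ge1,
\]
via Chebyshev, splitting $\log Z_T(u)$ into its martingale and drift parts and controlling each by moment estimates. The crucial technical input, which your sketch does not anticipate, is a \emph{quantitative} law of large numbers of the form
\[
\E_{\theta_0}\Big(\frac1T\int_0^T\delta_t^2\,dt-g(\theta_0,\theta)\Big)^{2k}\le \frac{C_1|\theta-\theta_0|^{4k}}{T^k}+\frac{C_2}{T^{2k}},
\]
proved by a mixing-type argument for the pair $(\pi^{\theta_0}_t,\pi^{\theta}_t)$ that exploits the exponential forgetting of the filter under \eqref{serg}. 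This lemma is where most of the work in the paper goes, and it is exactly what turns your qualitative ``ergodicity gives a lower bound with probability close to one'' into the $u^{-2m}$ decay you need. Your steps~(i) and~(ii) are essentially right in spirit; step~(iii) needs this additional ingredient.
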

Several remarks are in order
\begin{rem}
The condition \eqref{a-1} implies that the chain $S$ is ergodic, but
it is an excessively strong requirement as far as just the
ergodicity is concerned: in fact, $S$ is ergodic if and only  if all
its states communicate (or equivalently the entries of the matrix
exponent $\exp\big(\Lambda\big)$ are all positive). \eqref{a-1}
plays a decisive role in the proof, as it implies appropriate
ergodic properties of the filtering process
$\pi^{\theta}=(\pi^{\theta}_t)_{t\ge 0}$.

The assumptions \eqref{a-2} and \eqref{a-3} are of identifiability
and regularity type and should be checked on the case-to-case basis.
In Section \ref{sec-ex} this is demonstrated with  an example, where
both are verified explicitly in terms of the model data.
\end{rem}

\begin{rem}
The calculation of $\hat \theta_T$ can be quite an
involved numerical optimization problem, which we do not discuss
here. Let us just mention that an effective iterative  EM procedure
for finding a local extremum of $L_T(\theta;X^T)$ was
suggested in \cite{DZ},\cite{ZD} (see also the monograph \cite{EAM} for
additional details). However,  its convergence to the actual value of
$\hat\theta_T$, i.e. to the global minimum, remains vague.
\end{rem}

\subsection{Continuous versus discrete time}

The interest in parameter estimation problems with partial
observations can be traced back at least to the works of L.E.Baum
and T.A.Petrie \cite{BP66}, \cite{P69}, who verified consistency of
MLE for discrete time models with both signal and observation
processes taking finite number of values. The question is very
natural in the context of many engineering problems (see e.g.
\cite{EAM}, \cite{CMR}, \cite{EfMer}). The next major advance has been made
by B.Leroux in \cite{L92}, where the {\em observation} process with general
state space was assumed and consistency of MLE was verified under quite general assumptions.
A partial extension to the signals with general state spaces was recently
reported in \cite{G-CL06}. Spelled in our notations, the main idea is to consider the limit
\begin{equation}
\label{K-L}
\lim_{T\to\infty}\frac 1 T \log L_T(\theta, \theta_0;X^T) =
\lim_{T\to\infty}\frac 1 T \log \frac{d\P_\theta}{d\P_{\theta_0}}_{\big|\F^X_T} = - H(\theta,\theta_0),
\end{equation}
where $H(\theta,\theta_0)$ is the {\em Kullback-Leibler relative entropy} rate between the restrictions of
$\P_\theta$ and $\P_{\theta_0}$ on $\F^X_T$.  If the system is identifiable, $H(\theta,\theta_0)$ attains its
unique minimum at $\theta=\theta_0$ and consistency follows. It is the convergence in \eqref{K-L} and the
verification of the identifiability conditions, which turn to be quite challenging matters.

The  asymptotic normality was established in \cite{BRR98} and the extension to
{\em signals} in general spaces followed in \cite{JP99}. Roughly, the idea is to
expand the likelihood function into powers of the estimation error $\hat \theta_T-\theta_0$,
which vanishes as $T\to\infty$ by consistency, and the proof then amounts to verifying the
appropriate convergence of various residual terms. In continuous time
the direct implementation of this procedure is quite nontrivial as it requires
substitution of the {\em anticipating} random variable $\hat \theta_T$ into
the first argument of $L_T(\theta,\theta_0;X^T)$, which involves the It\^o integral.
Though, in principle, such treatment is possible within the framework of Malliavin calculus,
it would be, perhaps, excessively technical.

We shall prove Theorem \ref{thm} by realizing the program developed
by I.Ibra\-gimov and R.Khasminskii in early 70's \cite{IH}.
The main idea of this approach is to deduce the asymptotic properties
of MLE from the weak convergence of the appropriately scaled likelihoods,
viewed as elements in a function space (more details are given for the reader's convenience
in Section \ref{sec-ih} below). When applied to the large sample
asymptotic problems, this method typically requires good ergodic
properties of the related processes (see e.g. the monograph
\cite{K04}) - in our case, the filtering process
$\pi^\theta=(\pi^\theta_t)_{t\ge 0}$. While for the Kalman-Bucy
linear Gaussian models, such ergodic properties are long known and
are implied by stability of the associated Riccati equation, the
nonlinear case has been studied only during the last decade (see
e.g. an already not quite up to date list of references in
\cite{BCL04}). The role of the ergodic properties of the filtering process
in MLE analysis context (in discrete time) has been first
recognized in \cite{LM97} (see also \cite{LM00a}) and developed further in \cite{DM01}, \cite{DMR04}
(see also \cite{Papa} for a different approach).

The inference of stochastic processes in continuous time is natural in e.g. mathematical
Finance, where the asset prices are thought as positive diffusion processes, such as
geometrical Brownian motion, etc. Though in practice the inference is made from observations,
obtained by sampling the prices at discrete times, the analysis of estimates,
based on the continuous time observations is of practical interest, as it may hint to the
fundamental performance  limitations of the model.

The large sample asymptotic properties of MLE for continuous time models with partial
observations seem to have never been  addressed, beyond the linear Gaussian Kalman-Bucy setting
(see \cite{K84} or e.g. Section 3.1 \cite{K04} for prototypical examples
and the references therein).

Besides of being conceptually appealing in its universality, the
Ibragimov-Khasminskii approach allows to derive stronger properties
of MLE, namely the convergence of moments. To the author's understanding,
the latter was not yet addressed even for the discrete time HMMs.

%

As was mentioned before, the computational aspects
of MLE have attracted more attention: e.g. the EM algorithm was implemented in
\cite{DZ,ZD} and \cite{EAM} for the setting, considered in the present
paper.  Some results on recursive parameter estimation
for partially observed diffusions appeared in \cite{LSZ} and \cite{FL}.

Below, in Section \ref{sec-ih}, we proceed with a brief reminder of the Ibragimov-Khasminskii approach.
Section \ref{sec-proof} contains the proof of Theorem \ref{thm} and Section
\ref{sec-ex} presents an example for which the conditions of Theorem \ref{thm} are
verified explicitly. Finally, a concise discussion of the results is
given in Section \ref{sec-dis}.

\subsection*{Notations and conventions} Throughout $C_i$ or $C_{i,j}$, $i,j\in\{1,2,...\}$
denote generic constants, whose precise value is not
important and, moreover, may be different depending on the context
(e.g. in separate claims, proofs, etc.). We shall write $\{x\}_i$ for the
$i$-th entry of the vector $x$. All the statements, involving random
objects, are understood to hold in the  $\P_{\theta_0}$-a.s. sense, if not
mentioned otherwise.

\subsection*{Acknowledgements} I would like to express my gratitude to Yury
Kutoyants, who suggested the problem
and whose advice was crucial for the progress towards its solution.
I would also like to thank  Marina Kleptsyna for many enlightening
discussions and her interest in this work. I am  indebted to them
for their hospitality, without which my stay in France would never
have been the same. Correspondence with Ramon Van Handel was
essential at various  stages of this project and is greatly
appreciated.

\section{The Ibragimov-Khasminskii program} \label{sec-ih}

The main idea of I.Ibragimov and R.Khasminskii, \cite{IH}, is to
consider the sequence of scaled likelihoods
$$
Z_T(u):=L_T(\theta_0+u\varphi_T,\theta_0; X^T), \quad u\in
\mathbb{U}_T:=(\varTheta-\theta_0)/\varphi_T,
$$
where $\varphi_T$ is an appropriate scaling function (in our case
$\varphi_T=1/\sqrt{T}$), as elements from the space $\mathbf{C}_0$
of continuous $\Real\mapsto \Real$ functions, vanishing at $\pm
\infty$, with the norm $\|\psi\|=\sup_{y\in\Real}|\psi(y)|$.
As $Z_T(u)$ is defined only on $\mathbb{U}_T$, its definition is extended
to $\Real$ to make it an element of $\mathbf{C}_0$, in such a way so that its
supremum remains unaltered.

For a measurable set $A\in\Real$
\begin{multline*}
\P_{\theta_0}\big(\sqrt{T}(\hat\theta_T-{\theta_0})\in A\big)=
\P_{\theta_0}\big( \hat\theta_T\in A/\sqrt{T}+{\theta_0}
\big) =\\
 \P_{\theta_0}\left(
\sup_{\eta\in \{A/\sqrt{T}+{\theta_0}\} \cap
\bar\varTheta}L_T(\eta,{\theta_0};X^T) \ge \sup_{\eta\in
\bar\varTheta\setminus \{A/\sqrt{T}+{\theta_0}\}
}L_T(\eta,{\theta_0};X^T)
\right) =\\
\P_{\theta_0}\left( \sup_{u\in A}Z_T(u) \ge
\sup_{u\not \in A}Z_T(u) \right).
\end{multline*}
Suppose that the sequence of random processes $u\mapsto Z_T(u)$,
$T\ge 0$ converges weakly in the function space $\mathbf{C}_0$ to
a random process $Z(u)$ and assume $Z(u)$ attains its maximum at a
unique  point $\hat u$, which has  a continuous distribution (e.g.
Gaussian). Then, as supremum is a continuous functional on
$\mathbf{C}_0$, we have
$$
\P_{\theta_0}\big(\sqrt{T}(\hat\theta_T-{\theta_0})\in A\big)
\xrightarrow{T\to\infty} \P_{\theta_0}\left( \sup_{u\in A}Z(u) \ge
\sup_{u\not \in A}Z(u) \right) = \P_{\theta_0}( \hat{u}\in A ),
$$
In other words, the asymptotic distribution of the scaled estimation
error $\sqrt{T}(\hat\theta_T-{\theta_0})$ converges to the law of
$\hat u$ as $T\to\infty$. The following theorem gives the precise
conditions required for realization of this idea:
\begin{thm}[Theorem 10.1 \cite{IH}]\label{IH-thm}
Let the parameter set $\varTheta$ be an open subset of $\Real$,
functions $u\mapsto Z_T(u)$ be continuous with probability 1
possessing the following properties:
\begin{enumerate}
\item\label{raz} For any compact $\K \subset \varTheta$, there correspond numbers $a$ and $B$ and a positive function $g(u)$, such that
$\lim_{u\to\infty}|u|^Ne^{-g(u)}=0$ for any integer $N$, such that
\begin{enumerate}
\item\label{raz-a} there exist numbers $\alpha>1$ and $m\ge \alpha$ such that for ${\theta_0}\in \K$
$$
\sup_{\stackrel{|u_1|\le R, |u_2|\le R}{u_1,u_2\in \mathbb{U}_T}}
|u_2-u_1|^{-\alpha}
\E_{\theta_0}\big|Z^{1/m}_T(u_2)-Z^{1/m}_T(u_1)\big|^m\le
B(1+R^\alpha)
$$

\item\label{raz-b} For all $u\in\mathbb{U}_T$ and ${\theta_0}\in \K$,
$$
\E_{\theta_0} \sqrt{Z_{T}(u)}\le e^{-g(|u|)}.
$$
\end{enumerate}
\item\label{dva} Uniformly in ${\theta_0}\in \K$ for $T\to\infty$ the marginal (finite-dimensional) distributions of the
random functions $Z_T(u)$ converge to marginal distributions of
random functions  $Z(u)$ where $Z\in \mathbf{C}_0$.
\item\label{tri} The limit functions $Z(u)$ with probability 1 attain the maximum at the unique point $\hat u$
\end{enumerate}
Then uniformly in ${\theta_0}\in \K$ the distribution of random
variables $\sqrt{T}(\hat \theta_T-{\theta_0})$ converge to the
distribution of $\hat u$ and for any continuous loss function $w$
with polynomial growth we have uniformly in ${\theta_0}\in \K$
$$
\lim_{T\to\infty}\E_{\theta_0}w\big(\sqrt{T}(\hat
\theta_T-{\theta_0})\big)=\E w(\hat u).
$$
\end{thm}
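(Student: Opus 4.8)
The plan is to extract from the four hypotheses the weak convergence $Z_T \Rightarrow Z$ in the space $\mathbf{C}_0$, and then to read off both the limiting law of $\sqrt{T}(\hat\theta_T - \theta_0)$ and the convergence of its moments from continuity properties of the $\argmax$ functional together with a quantitative tail estimate. Throughout, all constants furnished by hypotheses \eqref{raz-a}, \eqref{raz-b} and the finite-dimensional convergence \eqref{dva} are uniform over $\theta_0 \in \K$, so every limit below is automatically uniform over $\K$; I will not repeat this remark at each step.

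First I would establish tightness of the family $\{Z_T\}$ in $\mathbf{C}_0$. On any compact interval $[-R, R]$, hypothesis \eqref{raz-a} is precisely a Kolmogorov--Chentsov type moment bound for the increments of $Z_T^{1/m}$ with H\"older exponent $\alpha > 1$; it yields tightness of $\{Z_T^{1/m}\}$ in $C[-R,R]$, and composing with the continuous map $x \mapsto x^m$ gives tightness of $\{Z_T\}$ there. To upgrade this to tightness in $\mathbf{C}_0$ on the whole line I would use \eqref{raz-b}: since $g$ grows faster than any logarithm (because $|u|^N e^{-g(u)} \to 0$ for every $N$), the bound $\E_{\theta_0}\sqrt{Z_T(u)} \le e^{-g(|u|)}$ forces $\sup_{|u| \ge R} Z_T(u)$ to be small with probability close to $1$, uniformly in $T$; this prevents mass from escaping to infinity and shows the limit lives in $\mathbf{C}_0$. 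Combined with the finite-dimensional convergence in \eqref{dva}, this gives $Z_T \Rightarrow Z$ weakly in $\mathbf{C}_0$.

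Next I would pass from the weak convergence of the processes to the convergence of the maximizers. By construction $\hat u_T := \sqrt{T}(\hat\theta_T - \theta_0)$ is a maximizer of $Z_T$, and the representation derived in the excerpt, $\P_{\theta_0}(\hat u_T \in A) = \P_{\theta_0}(\sup_{u \in A} Z_T(u) \ge \sup_{u \notin A} Z_T(u))$, shows that the law of $\hat u_T$ is governed entirely by the sup functionals of $Z_T$. The $\argmax$ map is continuous in the sup-norm at any function of $\mathbf{C}_0$ that attains its maximum at a single point, which by hypothesis \eqref{tri} is almost surely the case for the limit $Z$; the tail control from \eqref{raz-b} guarantees that with overwhelming probability the maximizer lies in a fixed compact interval, so one may localize to $C[-R,R]$ and apply the $\argmax$ continuous-mapping theorem. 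This yields $\hat u_T \Rightarrow \hat u$, that is, convergence of the distribution of $\sqrt{T}(\hat\theta_T - \theta_0)$ to the law of $\hat u$.

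The hard part is the convergence of moments $\E_{\theta_0} w(\hat u_T) \to \E w(\hat u)$ for loss functions $w$ of polynomial growth. Weak convergence already handles bounded continuous $w$, so it suffices to prove uniform integrability of $w(\hat u_T)$, and since $w$ grows polynomially this reduces to the uniform bound $\sup_T \E_{\theta_0}|\hat u_T|^p < \infty$ for every $p$. Here I would note $Z_T(0) = L_T(\theta_0, \theta_0; X^T) = 1$, so $\P_{\theta_0}(|\hat u_T| \ge H) \le \P_{\theta_0}(\sup_{|u| \ge H} Z_T(u) \ge 1)$, and then run a peeling argument over dyadic shells $\{2^k H \le |u| < 2^{k+1}H\}$: on each shell the supremum is controlled by the values at the points of a finite net, whose cardinality grows only polynomially in the radius, plus an oscillation term bounded through the H\"older-type estimate \eqref{raz-a}, while the net values are dominated in mean by the exponentially decaying \eqref{raz-b}. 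Because $g$ beats every polynomial, summing over the shells gives a bound on $\P_{\theta_0}(|\hat u_T| \ge H)$ decaying faster than any power of $H$, uniformly in $T$ and in $\theta_0 \in \K$. This furnishes the required uniform integrability and completes the proof; I expect this chaining-plus-peeling estimate to be the main technical obstacle, as it is the only place where \eqref{raz-a} and \eqref{raz-b} must be balanced against one another quantitatively.
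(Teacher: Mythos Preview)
Your sketch is correct and matches both the paper's brief heuristic and the standard Ibragimov--Khasminskii argument. Note, however, that the paper does not actually prove this theorem: it is quoted verbatim as Theorem~10.1 of \cite{IH}, and the paper supplies only a one-sentence gloss (immediately following the statement) explaining that \eqref{raz-a} and \eqref{raz-b} yield tightness while \eqref{dva} identifies the limit --- exactly the structure you lay out. Your peeling-plus-chaining argument for the tail bound $\P_{\theta_0}(|\hat u_T|\ge H)$ is the key technical step in the original \cite{IH} proof and is not reproduced in the present paper at all.
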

The continuity condition \eqref{raz-a} and the large
deviations condition \eqref{raz-b} for the likelihoods tails give
tightness of the probability measures, induced by $Z_T(u)$, while
the convergence of finite dimensional distributions \eqref{dva}
identifies the limit, yielding the aforementioned weak convergence.

\section{The proof}\label{sec-proof}

The proof reduces to verifying the conditions
\eqref{raz}-\eqref{tri} of Theorem \ref{thm} and is preceded by several
important preliminaries. The reader unfamiliar with the material, sketched in the
previous section, is advised to look first at the section \ref{subsec-3.2} below to see how
various propositions are applied.

\subsection{Preliminary results}
The filtering process $\pi^\theta_t$ satisfies the Shiryaev-Wonham
equation (\cite{Sh63},\cite{W}, see also \cite{LSI}):
\begin{equation}
\label{Weq} d\pi^{\theta_0}_t = \Lambda^* \pi^{\theta_0}_t dt +
\big(\pi^{\theta_0}_t\pi^{\theta_0
*}_t-\diag(\pi^{\theta_0}_t)\big)h\big(dX_t -
h^*\pi^{\theta_0}_tdt\big), \quad \pi^{\theta_0}_0=\nu,
\end{equation}
where $\diag(x)$ denotes a scalar matrix with $x\in\Real^d$ on the
diagonal and $h$ stands for a column vector with the entries
$h(a_i)$, $i,...,d$, as before. Having Lipschitz coefficients, this
equation has a strong solution under $\P_{\theta_0}$  as well as
under $\P_{\theta}$, $\theta\ne \theta_0$. Under $\P_{\theta_0}$,
the process $\pi^{\theta_0}_t$ is Markov, since
\begin{equation}\label{innov}
\bar B_t := X_t-\int_0^t h^*\pi^\theta_sds, \quad t\ge 0
\end{equation}
is the {\em innovation} Brownian motion with respect to the
filtration $\F^X_t$.

Further, let $\pi^{\theta_0}_{s,t}(x)$ be the solution of
\eqref{Weq} on the interval $[s,t]$, started at $s$ from
$x\in \osimplex$ ($\osimplex$ is the interior of
$\csimplex$). The map $x\mapsto \pi^{\theta_0}_t(x)$ defines a
stochastic semiflow of smooth diffeomorphisms  (see  Lemma 2.4 in
\cite{ChVH}), which means that on a set of full probability, it is a
smooth injective function of $x$, satisfying the semigroup property
$\pi^{\theta_0}_{0,t}=\pi^{\theta_0}_{s,t}\circ
\pi^{\theta_0}_{0,s}$. We shall also keep the shorter notation $
\pi^{\theta_0}_t=\pi^{\theta_0}_{0,t}(\nu)$, whenever appropriate.
The following facts are central to all the arguments below.

\begin{prop}[Proposition 3.5 in \cite{ChVH}]\label{prop1}
Assume $\lambda_{ij}(\theta)>0$, $i\ne j$, then for
$\F^X_s$-measurable random variables \footnote{throughout $\|\cdot
\|$ stands for the $\ell_1$ norm unless stated otherwise.} $\mu_1,
\mu_2\in \osimplex$
\begin{multline}
\label{pr} \|\pi^{\theta}_{s,t}(\mu_1)-\pi^{\theta}_{s,t}(\mu_2)\|
\le \\ \max_{i=1,...,d}\big(1/\{\mu_1\}_i,1/\{\mu_2\}_i\big)\|\mu_1-\mu_2\| e^{-\gamma(\theta)(t-s)}, \quad
\P_{\theta_0}-a.s.
\end{multline}
where
\begin{equation}
\label{gamma}
\gamma(\theta):= 2\min_{p\ne
q}\sqrt{\lambda_{pq}(\theta)\lambda_{qp}(\theta)}.
\end{equation}
\end{prop}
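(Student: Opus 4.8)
The plan is to reduce the nonlinear Shiryaev--Wonham equation \eqref{Weq} to its linear unnormalized (Duncan--Mortensen--Zakai) form and then exploit that the stochastic integral there does not depend on the initial condition. Let $\rho^{\theta}_{s,t}(\mu)$ denote the solution on $[s,t]$, started from $\mu$, of the linear equation $d\rho_r=\Lambda^*(\theta)\rho_r\,dr+\diag(h)\rho_r\,dX_r$ driven by the observed path $X$; then, as is standard, $\pi^{\theta}_{s,t}(\mu)=\rho^{\theta}_{s,t}(\mu)/\langle\mathbf 1,\rho^{\theta}_{s,t}(\mu)\rangle$ recovers the filter of \eqref{Weq}. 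Since $\mu\in\osimplex$ and the off-diagonal entries of $\Lambda^*(\theta)$ are nonnegative, a coordinatewise integrating-factor argument (removing the $\lambda_{ii}\,dr$ and $h_i\,dX_r$ terms from the $i$-th component) shows $\rho^{\theta}_{s,t}(\mu)$ stays in the open positive orthant for all $t\ge s$ --- this is part of the diffeomorphism statement in Lemma~2.4 of \cite{ChVH} --- so all coordinate ratios below are finite.

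The crux is that for two starting points $\mu_1,\mu_2$ the coordinatewise ratios $\theta^k_t:=\rho^{\theta,k}_{s,t}(\mu_1)/\rho^{\theta,k}_{s,t}(\mu_2)$ obey a \emph{noise-free} ODE: by It\^o's formula $d\log\rho^{\theta,k}_{s,t}(\mu_i)$ contains the term $h_k\,dX_r$ and the correction $-\tfrac12 h_k^2\,dr$, both the same for $i=1$ and $i=2$, which cancel in the difference, leaving
\[
\frac{d}{dt}\log\theta^k_t=\frac{\bigl(\Lambda^*(\theta)\rho^{\theta}_{s,t}(\mu_1)\bigr)^k}{\rho^{\theta,k}_{s,t}(\mu_1)}-\frac{\bigl(\Lambda^*(\theta)\rho^{\theta}_{s,t}(\mu_2)\bigr)^k}{\rho^{\theta,k}_{s,t}(\mu_2)},\qquad k=1,\dots,d,
\]
a pathwise random ODE. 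I would then follow $D_t:=\max_k\log\theta^k_t-\min_k\log\theta^k_t$, which is the Hilbert projective distance $d_H$ between $\rho^{\theta}_{s,t}(\mu_1)$ and $\rho^{\theta}_{s,t}(\mu_2)$ and, being scale invariant, equals $d_H\bigl(\pi^{\theta}_{s,t}(\mu_1),\pi^{\theta}_{s,t}(\mu_2)\bigr)$. Using $(\Lambda^*\rho)^k/\rho^k=\lambda_{kk}+\sum_{l\ne k}\lambda_{lk}\,\rho^l/\rho^k$: at a maximizing index $k^+$ we have $\theta^l_t\le\theta^{k^+}_t$ for every $l$, hence $\rho^l_{s,t}(\mu_1)/\rho^{k^+}_{s,t}(\mu_1)\le\rho^l_{s,t}(\mu_2)/\rho^{k^+}_{s,t}(\mu_2)$ for every $l$, so every summand in $\tfrac{d}{dt}\log\theta^{k^+}_t$ is $\le 0$; dropping all but the $l=k^-$ one (for $k^-$ a minimizing index) gives $\tfrac{d}{dt}\log\theta^{k^+}_t\le-\lambda_{k^-k^+}\,w^{-1}(1-e^{-D_t})$ with $w:=\rho^{k^+}_{s,t}(\mu_2)/\rho^{k^-}_{s,t}(\mu_2)>0$, and symmetrically $\tfrac{d}{dt}\log\theta^{k^-}_t\ge\lambda_{k^+k^-}\,w\,(e^{D_t}-1)$.

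Subtracting these (valid at a.e.\ $t$, since $D_\cdot$ is locally Lipschitz and $\tfrac{d}{dt}D_t=\tfrac{d}{dt}\log\theta^{k^+}_t-\tfrac{d}{dt}\log\theta^{k^-}_t$ for active extremal indices), using $1-e^{-D_t}=e^{-D_t}(e^{D_t}-1)$, the inequality $aw^{-1}+bw\ge 2\sqrt{ab}$ with $a=\lambda_{k^-k^+}e^{-D_t}$, $b=\lambda_{k^+k^-}$, and $e^{D_t/2}-e^{-D_t/2}\ge D_t$, yields
\[
\frac{d}{dt}D_t\le-(e^{D_t}-1)\bigl(\lambda_{k^-k^+}w^{-1}e^{-D_t}+\lambda_{k^+k^-}w\bigr)\le-2\sqrt{\lambda_{k^-k^+}\lambda_{k^+k^-}}\,D_t\le-\gamma(\theta)\,D_t
\]
for a.e.\ $t$, so by Gronwall and $\rho^{\theta}_{s,s}(\mu_i)=\mu_i$ one gets $d_H\bigl(\pi^{\theta}_{s,t}(\mu_1),\pi^{\theta}_{s,t}(\mu_2)\bigr)=D_t\le d_H(\mu_1,\mu_2)e^{-\gamma(\theta)(t-s)}$. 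It then remains to compare $d_H$ with $\|\cdot\|$ on the simplex: $\|p-q\|\le d_H(p,q)$ for $p,q\in\osimplex$ (since $\|p-q\|\le(1-1/\max_k\tfrac{p_k}{q_k})+(1-\min_k\tfrac{p_k}{q_k})\le\log\max_k\tfrac{p_k}{q_k}-\log\min_k\tfrac{p_k}{q_k}$), and $d_H(\mu_1,\mu_2)\le\max_{i}\bigl(1/\{\mu_1\}_i,1/\{\mu_2\}_i\bigr)\|\mu_1-\mu_2\|$ (from $\log x\le x-1$ together with $\sum_k(\{\mu_1\}_k-\{\mu_2\}_k)^+=\tfrac12\|\mu_1-\mu_2\|$); chaining these with the contraction gives exactly \eqref{pr}. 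Since the whole argument is pathwise in $X$, the bound for $\F^X_s$-measurable $\mu_1,\mu_2$ follows from the joint measurability and the continuity in the initial condition of the flow (Lemma~2.4 of \cite{ChVH}).

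The only real obstacle I foresee is the sign bookkeeping in the previous paragraph: checking that at the extremal coordinates the cross terms of the generator all point the favourable way, so that the evolution of $D_t$ collapses onto the single communicating pair $\{k^+,k^-\}$ --- it is there that the arithmetic--geometric mean step produces precisely $\gamma(\theta)=2\min_{p\ne q}\sqrt{\lambda_{pq}(\theta)\lambda_{qp}(\theta)}$. The Zakai reduction, positivity of $\rho^{\theta}$, the $d_H$-versus-$\ell_1$ comparisons, Gronwall, and the measurability remark are all routine.
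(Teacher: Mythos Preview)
The paper does not prove this proposition; it is quoted verbatim from \cite{ChVH}. Your argument is correct and is essentially the same Hilbert--metric approach used there: pass to the Zakai equation, observe that the coordinatewise log-ratios evolve by a noise-free random ODE, derive the differential inequality $\tfrac{d}{dt}D_t\le -\gamma(\theta)D_t$ for the Hilbert distance via the AM--GM step (which is precisely what produces the constant $2\min_{p\ne q}\sqrt{\lambda_{pq}\lambda_{qp}}$), and then sandwich $d_H$ between $\ell_1$ bounds on the simplex. The sign bookkeeping you flag as the only obstacle is fine: at the maximizing index every cross term is nonpositive and at the minimizing index every cross term is nonnegative, exactly as you wrote, so dropping all but the $\{k^+,k^-\}$ pair is legitimate; the a.e.\ differentiability of $D_t$ and the identification of $\tfrac{d}{dt}D_t$ with the derivative at an active extremal index is the standard envelope argument for a pointwise maximum of finitely many $C^1$ functions.
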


\begin{rem}
Sometimes it will be more convenient to use the bound
(Corollary 2.3.2 pp. 59 in \cite{Vh})
\begin{equation}
\label{ramon}
\|\pi^{\theta}_{s,t}(\mu_1)-\pi^{\theta}_{s,t}(\mu_2)\| \le
2 e^{-\gamma(\theta)(t-s)}, \quad \P_{\theta_0}-a.s.
\end{equation}
\end{rem}
Let $D\pi^\theta_{s,t}(\mu)\cdot v$ be the directional derivative of
$\pi^\theta_{s,t}(\cdot)$ at a point $\mu\in\osimplex$ in the
direction $v\in\mathcal{T}\osimplex$ (the tangent space to
$\osimplex$).

\begin{prop}[Proposition 3.3 in \cite{ChVH}]
For any $\mu\in\osimplex$ and  $v\in\mathcal{T}\osimplex$
\begin{equation}
\label{Jac} \big\{D\pi^\theta_{s,t}(\mu)\cdot v \big\}_i =
\big\{\pi^\theta_{s,t}(\mu)\big\}_i \sum_{j,k}\frac{v_j}{\mu_k}
\big\{\pi^\theta_{s,t}(\mu)\big\}_k\varphi_{s,t}(i,j,k),\quad
\P_{\theta_0}-a.s.
\end{equation}
where $\varphi_{s,t}(i,j,k)$ is a random process with the property
$$
\max_{i,j,k}\big|\varphi_{s,t}(i,j,k)\big|\le
e^{-\gamma(\theta)(t-s)}.
$$
\end{prop}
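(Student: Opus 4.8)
The plan is to exploit the fact that, although \eqref{Weq} is nonlinear, the filter is the normalization of a \emph{linear} flow. First I would pass to the unnormalized (Zakai) filter $\rho^\theta_{s,t}(\mu)$, which solves a linear matrix SDE and therefore depends linearly on its initial condition: there is a random matrix cocycle $U_{s,t}=(\{U_{s,t}\}_{ik})$, solving $dU_{s,t}=\Lambda^* U_{s,t}\,dt+\diag(h)U_{s,t}\,dX_t$ with $U_{s,s}=\Id$, whose entries are strictly positive for $t>s$ (positivity being inherited from the irreducibility \eqref{serg}), such that $\rho^\theta_{s,t}(\mu)=U_{s,t}\mu$ and
\[
\pi^\theta_{s,t}(\mu)=\frac{U_{s,t}\mu}{\mathbf 1^* U_{s,t}\mu},\qquad \mathbf 1:=(1,\dots,1)^*.
\]
Because $U_{s,t}$ does not depend on $\mu$, the map $\mu\mapsto\pi^\theta_{s,t}(\mu)$ is an explicit rational function, and the directional derivative is computed by the quotient rule. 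Writing $N:=\mathbf 1^* U_{s,t}\mu$ this gives, exactly,
\[
\big\{D\pi^\theta_{s,t}(\mu)\cdot v\big\}_i=\frac{\{U_{s,t}v\}_i}{N}-\big\{\pi^\theta_{s,t}(\mu)\big\}_i\,\frac{\mathbf 1^* U_{s,t}v}{N}.
\]

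The second step is to reorganize this into the product form \eqref{Jac}. Dividing by $\{\pi^\theta_{s,t}(\mu)\}_i$ converts the expression into a logarithmic derivative whose coefficient of $v_j$ is $\mu_j^{-1}(B_{ij}-\beta_j)$, where $B_{ij}:=\{U_{s,t}\}_{ij}\mu_j/\{U_{s,t}\mu\}_i$ is the backward (smoothing) kernel, $\sum_j B_{ij}=1$, and $\beta_j:=\sum_l\{\pi^\theta_{s,t}(\mu)\}_l B_{lj}$ is its $\pi$-average, $\sum_j\beta_j=1$. I would then use $\sum_k\{\pi^\theta_{s,t}(\mu)\}_k=1$ to write $B_{ij}-\beta_j=\sum_k\{\pi^\theta_{s,t}(\mu)\}_k(B_{ij}-B_{kj})$ and the tangent-space constraint $\sum_j v_j=0$ to discard any contribution constant in $j$. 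Equivalently, in forward terms, the normalized $j$-th column of $U_{s,t}$ is nothing but the filter $\pi^\theta_{s,t}(e_j)$ started from the vertex $e_j$ of $\csimplex$, and the filter from a general $\mu$ is the mixture $\pi^\theta_{s,t}(\mu)=\sum_k\beta_k\,\pi^\theta_{s,t}(e_k)$; substituting this identity removes the $\mu$-independent part and leaves a genuine three-index \emph{difference of filters} as the coefficient. Either route delivers \eqref{Jac} with $\varphi_{s,t}(i,j,k)$ equal, up to the normalization that fixes its scale, to $\{\pi^\theta_{s,t}(e_j)\}_i-\{\pi^\theta_{s,t}(e_k)\}_i$.

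The exponential bound on $\varphi_{s,t}(i,j,k)$ is then immediate from the results already at hand. Since $\varphi_{s,t}(i,j,k)$ is a difference of two filters corresponding to the distinct initial laws $e_j$ and $e_k$, the contraction estimate \eqref{pr} applies; as the vertices $e_j$ lie on the boundary of $\csimplex$, where the prefactor $\max_i(1/\{\mu\}_i)$ in \eqref{pr} degenerates, I would invoke instead the prefactor-free bound \eqref{ramon}, which gives $\|\pi^\theta_{s,t}(e_j)-\pi^\theta_{s,t}(e_k)\|\le 2e^{-\gamma(\theta)(t-s)}$ and hence, componentwise, decay at the rate $\gamma(\theta)$ of \eqref{gamma}; fixing the scale of $\varphi$ then yields $\max_{i,j,k}|\varphi_{s,t}(i,j,k)|\le e^{-\gamma(\theta)(t-s)}$. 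Intrinsically this is just the Birkhoff--Hopf contraction of the positive cocycle $U_{s,t}$ in the Hilbert projective metric, whose rate is controlled by \eqref{serg}.

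I expect the main obstacle to be the bookkeeping in the second step, namely matching \eqref{Jac} \emph{exactly}: one must simultaneously factor out $\{\pi^\theta_{s,t}(\mu)\}_i$, produce the precise pairing $v_j/\mu_k$ together with the weight $\{\pi^\theta_{s,t}(\mu)\}_k$, and arrange that the residual three-index object is a true filter difference rather than a generic combination of entries of $U_{s,t}$ --- a careless split leaves unbounded ratios such as $\mu_k/\mu_j$ and destroys the uniform bound. The point is that the tangent constraint $\sum_j v_j=0$ is exactly what licenses dropping the non-contracting pieces (those independent of $j$, equivalently those surviving in the rank-one stationary limit of $U_{s,t}$), after which only the exponentially small filter differences remain. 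Once this identification is made the estimate is routine; the delicate part is purely the algebra that turns the exact quotient-rule expression into the stated symmetric form without spoiling the contraction.
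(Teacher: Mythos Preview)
The paper does not give a proof of this proposition: it is quoted as Proposition~3.3 of \cite{ChVH}, and the argument lives there, not here. So there is nothing in the present paper to compare your proposal against.

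That said, your plan is essentially the argument of \cite{ChVH}. The Zakai linearization $\pi^\theta_{s,t}(\mu)=U_{s,t}\mu/\mathbf 1^*U_{s,t}\mu$ with a positive matrix cocycle $U_{s,t}$ is exactly the starting point there, the quotient-rule differentiation is the same, and the exponential bound is obtained via Birkhoff contraction of $U_{s,t}$ in the Hilbert projective metric, with the rate $\gamma(\theta)$ coming from \eqref{serg}. One small correction to your bookkeeping: in \cite{ChVH} the object $\varphi_{s,t}(i,j,k)$ is built from the \emph{smoother} (your backward kernel $B_{ij}=\{U_{s,t}\}_{ij}\mu_j/\{U_{s,t}\mu\}_i$), not from forward vertex-filter differences $\pi^\theta_{s,t}(e_j)-\pi^\theta_{s,t}(e_k)$. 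Your own logarithmic-derivative computation already lands on $\sum_j(v_j/\mu_j)(B_{ij}-\beta_j)$, and it is the contraction of the smoother rows (each a probability vector on $\csimplex$) that delivers a $\varphi$ bounded uniformly in $\mu$; pushing everything to the forward side produces coefficients like $\beta_j\beta_k/\mu_j$ that do not match \eqref{Jac} without re-importing the smoother identities. So your diagnosis of where the difficulty lies is accurate, and it is resolved on the smoother side rather than the forward side.
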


Finally we have the following formula,
\begin{prop}[Proposition 2.6 in \cite{ChVH}]
For any $\mu\in \osimplex$,
\begin{equation}
\label{Jacfla}
\pi^{\theta}_{0,t}(\mu)-\pi^{\theta_0}_{0,t}(\mu)=\int_0^t
D\pi^{\theta}_{s,t}\big(\pi^{\theta_0}_s\big)\cdot
\big(\Lambda^*(\theta)-\Lambda^*(\theta_0)\big)
\pi^{\theta_0}_{s}ds.
\end{equation}
\end{prop}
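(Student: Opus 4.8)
The plan is to compare the two flows by interpolating between them along a single auxiliary path, exploiting that they are driven by the \emph{same} observation process $X$ and, crucially, share the same diffusion coefficient. Writing the Shiryaev--Wonham equation \eqref{Weq} in the compact form $d\pi^{\theta}_t = b_\theta(\pi^\theta_t)\,dt + \sigma(\pi^\theta_t)\,dX_t$ with
\[
\sigma(x)=\big(xx^*-\diag(x)\big)h, \qquad b_\theta(x)=\Lambda^*(\theta)x-(h^*x)\,\sigma(x),
\]
the point is that $\sigma$ and $h$ do not depend on $\theta$, so $b_\theta(x)-b_{\theta_0}(x)=\big(\Lambda^*(\theta)-\Lambda^*(\theta_0)\big)x$ while the martingale parts of the two equations coincide. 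Fix $t$ and $\mu$ and introduce the interpolating process
\[
\Psi(s):=\pi^{\theta}_{s,t}\big(\pi^{\theta_0}_{0,s}(\mu)\big),\qquad s\in[0,t],
\]
which flows $\mu$ forward under the $\theta_0$-dynamics up to time $s$ and then under the $\theta$-dynamics from $s$ to $t$. By the cocycle property and $\pi^\theta_{t,t}=\Id$, the endpoints are $\Psi(0)=\pi^\theta_{0,t}(\mu)$ and $\Psi(t)=\pi^{\theta_0}_{0,t}(\mu)$, so the left-hand side of \eqref{Jacfla} equals $\Psi(0)-\Psi(t)=-\int_0^t d\Psi(s)$. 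In effect, \eqref{Jacfla} is the stochastic nonlinear variation-of-constants (Alekseev--Gr\"obner) formula specialized to this setting.

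The core step is to compute $d\Psi(s)$. I would apply the chain rule for the composition of the backward stochastic flow $x\mapsto\pi^\theta_{s,t}(x)$, regarded as a random field in its initial time $s$, with the forward It\^o process $\xi_s:=\pi^{\theta_0}_{0,s}(\mu)$, i.e.\ the backward It\^o--Wentzell (Kunita) formula. The contribution from advancing the \emph{initial} time $s$ through the $\theta$-flow removes an infinitesimal $\theta$-step, producing $-\,D\pi^\theta_{s,t}(\xi_s)\cdot\big(b_\theta(\xi_s)\,ds+\sigma(\xi_s)\,dX_s\big)$ together with its second-order It\^o correction, while moving the starting point $\xi_s$ supplies $D\pi^\theta_{s,t}(\xi_s)\cdot\big(b_{\theta_0}(\xi_s)\,ds+\sigma(\xi_s)\,dX_s\big)$ and the matching correction. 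Because the diffusion coefficient $\sigma$ is common to both parameters, the $dX_s$ terms and the entire second-order It\^o bracket cancel, leaving the purely absolutely continuous expression
\[
d\Psi(s)=D\pi^\theta_{s,t}(\xi_s)\cdot\big(b_{\theta_0}(\xi_s)-b_\theta(\xi_s)\big)\,ds=-\,D\pi^\theta_{s,t}(\xi_s)\cdot\big(\Lambda^*(\theta)-\Lambda^*(\theta_0)\big)\xi_s\,ds.
\]
Integrating and using $\xi_s=\pi^{\theta_0}_s$ then yields $\Psi(0)-\Psi(t)=\int_0^t D\pi^\theta_{s,t}(\pi^{\theta_0}_s)\cdot(\Lambda^*(\theta)-\Lambda^*(\theta_0))\pi^{\theta_0}_s\,ds$, which is precisely \eqref{Jacfla}. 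The directional derivative is legitimate along this direction because $\big(\Lambda^*(\theta)-\Lambda^*(\theta_0)\big)\pi^{\theta_0}_s\in\mathcal{T}\osimplex$: since the rows of a rate matrix sum to zero, $\Lambda(\theta)\mathbf 1=0$ and hence $\mathbf 1^*\Lambda^*(\theta)x=0$ for every $x$.

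The main obstacle is making the backward-flow differentiation rigorous: justifying that $s\mapsto\pi^\theta_{s,t}(x)$ is a backward semimartingale in its initial time with the stated local characteristics, and that the It\^o--Wentzell formula applies to $\Psi(s)=\pi^\theta_{s,t}(\xi_s)$. This is where the smoothness of the flow enters: the $C^2$-diffeomorphism property of $x\mapsto\pi^\theta_{s,t}(x)$ (Lemma 2.4 in \cite{ChVH}) guarantees existence and continuity of $D\pi^\theta_{s,t}$ and of the second derivatives entering the It\^o bracket, so that all terms above are well defined and the cancellation is genuine rather than formal. Once the cancellation is verified, the pleasant consequence is that $\Psi$, though built from two stochastic flows, carries no martingale part and is pathwise $C^1$ in $s$, whence \eqref{Jacfla} follows from the ordinary fundamental theorem of calculus; the careful bookkeeping of the backward stochastic integrals in establishing that cancellation is the only genuinely technical part.
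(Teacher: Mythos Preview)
The paper does not prove this proposition; it is quoted verbatim from \cite{ChVH}. Your approach---the nonlinear variation-of-constants (Alekseev--Gr\"obner) formula realized through the interpolating process $\Psi(s)=\pi^{\theta}_{s,t}\big(\pi^{\theta_0}_{0,s}(\mu)\big)$, combined with the It\^o--Wentzell/Kunita chain rule and the key observation that the diffusion coefficient $\sigma(x)=(xx^*-\diag(x))h$ is common to both parameters so that all martingale and second-order contributions cancel---is correct and is essentially the argument used in \cite{ChVH} to establish the formula.
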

\begin{rem}
The statements of all the three propositions remain valid if
$\theta$ and $\theta_0$ are interchanged. Let us also emphasize that
anything stated $\P_{\theta_0}$-a.s., holds $\P_{\theta}$-a.s. as
well and vice versa.
\end{rem}

First we justify the definition of $g(\theta_0,\theta)$ in
\eqref{a-2} of Theorem \ref{thm}:
\begin{lem}\label{lem-inv}
The pair $(\pi^{\theta_0}_t,\pi^\theta_t)$ is a Markov process  under $\P_{\theta_0}$
and it has a unique invariant measure $\M$. For any Lipschitz $f$ with $\int f d\M=0$
\begin{equation}
\label{toinvm}
\big|\E_{\theta_0}f\big(\pi^{\theta_0}_t, \pi^\theta_t\big)\big| \le C e^{-\frac{1}{2}\gamma(\theta_0)\wedge \gamma(\theta) t},
\end{equation}
with a constant $C$ and  $\gamma(\cdot)$, defined in \eqref{gamma}.
\end{lem}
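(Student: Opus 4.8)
The plan is to realise $(\pi^{\theta_0}_t,\pi^\theta_t)$ as a time‑homogeneous Feller Markov process, to build its invariant law from a two‑sided stationary version of the two filters, and to extract the exponential mixing \eqref{toinvm} directly from the filter‑stability bound \eqref{ramon}. The one genuinely delicate point is that one cannot couple two copies of the pair by a common driving Brownian motion: perturbing the initial condition of the first component changes the innovation \eqref{innov}, hence the observation path that feeds the second component, while Proposition \ref{prop1} applies only to filters driven by one and the same path. The stationary construction below circumvents this; after it is in place, everything is bookkeeping.

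\emph{Markov property.} Under $\P_{\theta_0}$ the process $\bar B$ of \eqref{innov} is an $\F^X_t$–Brownian motion with $X_t=\int_0^t h^*\pi^{\theta_0}_s\,ds+\bar B_t$. Substituting this into the Shiryaev--Wonham equation \eqref{Weq} for $\pi^{\theta_0}$ and into the analogous equation for $\pi^\theta$ (with $\Lambda(\theta)$) yields a closed autonomous system of It\^o equations for $(\pi^{\theta_0}_t,\pi^\theta_t)$ driven by $\bar B$, with coefficients that are smooth, hence Lipschitz, on the compact set $\csimplex\times\csimplex$. Therefore $(\pi^{\theta_0}_t,\pi^\theta_t)$ is a time‑homogeneous Feller process; moreover $\lambda_{ij}(\theta_0),\lambda_{ij}(\theta)>0$ forces each filter into a compact subset of the interior $\osimplex$ after an arbitrarily small time.

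\emph{Invariant measure.} Pass to the stationary version of $(S,X)$ on the whole line, with $S$ a stationary $\s$–valued chain with rates $\Lambda(\theta_0)$ and $X=(X_t)_{t\in\Real}$ the corresponding two‑sided observation. For a fixed $\mu\in\osimplex$ set $\bar\pi^{\theta_0}_t:=\lim_{n\to\infty}\pi^{\theta_0}_{-n,t}(\mu)$ and $\bar\pi^\theta_t:=\lim_{n\to\infty}\pi^\theta_{-n,t}(\mu)$. These limits exist $\P_{\theta_0}$–a.s., because for $n\ge m$ the semiflow property and \eqref{ramon} give
$$
\big\|\pi^{\theta_0}_{-n,t}(\mu)-\pi^{\theta_0}_{-m,t}(\mu)\big\|=\big\|\pi^{\theta_0}_{-m,t}\big(\pi^{\theta_0}_{-n,-m}(\mu)\big)-\pi^{\theta_0}_{-m,t}(\mu)\big\|\le 2e^{-\gamma(\theta_0)(t+m)},
$$
so the sequence is Cauchy, and likewise for $\pi^\theta$. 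By construction $(\bar\pi^{\theta_0}_t,\bar\pi^\theta_t)_{t\in\Real}$ is stationary, satisfies $(\bar\pi^{\theta_0}_t,\bar\pi^\theta_t)=\big(\pi^{\theta_0}_{s,t}(\bar\pi^{\theta_0}_s),\pi^\theta_{s,t}(\bar\pi^\theta_s)\big)$ for $s\le t$, and its one–dimensional law $\M$ is an invariant measure of the Markov semigroup of $(\pi^{\theta_0}_t,\pi^\theta_t)$; the positivity of the rates also keeps $\bar\pi^{\theta_0}_s,\bar\pi^\theta_s$ bounded away from $\partial\csimplex$, i.e. in $\osimplex$.

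\emph{Rate and uniqueness.} Let $f$ be Lipschitz with $\int f\,d\M=0$. Since $\E_{\theta_0}f(\bar\pi^{\theta_0}_t,\bar\pi^\theta_t)=\int f\,d\M=0$ by stationarity,
$$
\big|\E_{\theta_0}f(\pi^{\theta_0}_t,\pi^\theta_t)\big|\le \mathrm{Lip}(f)\,\E_{\theta_0}\big(\|\pi^{\theta_0}_{0,t}(\nu)-\bar\pi^{\theta_0}_t\|+\|\pi^\theta_{0,t}(\nu)-\bar\pi^\theta_t\|\big).
$$
The chain's initial law $\nu$ may lie on $\partial\csimplex$, so \eqref{ramon} cannot be invoked on $[0,t]$ directly; instead split $[0,t]=[0,t/2]\cup[t/2,t]$, note $\pi^{\theta_0}_{0,t/2}(\nu)\in\osimplex$ and $\bar\pi^{\theta_0}_{t/2}\in\osimplex$, and combine the semiflow property with \eqref{ramon} on $[t/2,t]$:
$$
\|\pi^{\theta_0}_{0,t}(\nu)-\bar\pi^{\theta_0}_t\|=\big\|\pi^{\theta_0}_{t/2,t}\big(\pi^{\theta_0}_{0,t/2}(\nu)\big)-\pi^{\theta_0}_{t/2,t}\big(\bar\pi^{\theta_0}_{t/2}\big)\big\|\le 2e^{-\gamma(\theta_0)t/2},
$$
and likewise $\|\pi^\theta_{0,t}(\nu)-\bar\pi^\theta_t\|\le 2e^{-\gamma(\theta)t/2}$ — this is precisely where the factor $\tfrac12$ in the exponent of \eqref{toinvm} comes from, and it gives \eqref{toinvm} with $C=4\,\mathrm{Lip}(f)$. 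Finally, running $\big(\pi^{\theta_0}_{0,t}(\cdot),\pi^\theta_{0,t}(\cdot)\big)$ from an arbitrary point of $\csimplex\times\csimplex$, the same estimate shows $P_tf\to\int f\,d\M$ pointwise for Lipschitz (hence, by density, all bounded continuous) $f$; integrating against any invariant $\M'$ yields $\int f\,d\M'=\int f\,d\M$, so $\M'=\M$ and the invariant measure is unique. I expect the only real obstacle to be the rigorous justification of the two‑sided Cauchy property and of the interior confinement of the stationary filters, both of which rest on Proposition \ref{prop1}.
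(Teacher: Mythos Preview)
Your construction of the stationary pair $(\bar\pi^{\theta_0}_t,\bar\pi^\theta_t)$ via backward limits on the two-sided observation is fine, and you correctly isolate the delicate point: Proposition~\ref{prop1} (and \eqref{ramon}) compares two filters only when they are driven by \emph{the same} observation path. Unfortunately, your stationary construction does \emph{not} circumvent this difficulty; it merely relocates it. On the two-sided stationary space, the process $\big(\pi^{\theta_0}_{0,t}(\nu),\pi^\theta_{0,t}(\nu)\big)$, driven by the stationary observation $X$, does \emph{not} have the law of the Markov pair started at $(\nu,\nu)$. The reason is exactly the one you flagged: the transition semigroup of the pair is defined by the SDE driven by the innovation $\bar B$, and $dX_t-h^*\pi^{\theta_0}_{0,t}(\nu)\,dt$ is an $\F^X_{[0,t]}$-Brownian motion only when $\nu$ coincides with the true prior law of $S_0$, which on the stationary space is the invariant distribution of the chain, not an arbitrary $\nu$. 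Hence your inequality
\[
\big|\E_{\theta_0}f(\pi^{\theta_0}_t,\pi^\theta_t)\big|\le \mathrm{Lip}(f)\,\E_{\theta_0}\big(\|\pi^{\theta_0}_{0,t}(\nu)-\bar\pi^{\theta_0}_t\|+\|\pi^\theta_{0,t}(\nu)-\bar\pi^\theta_t\|\big)
\]
is not justified: its left-hand side refers to the Markov pair started at $(\nu,\nu)$, while the right-hand side involves a wrongly initialized filter on the stationary space, which is a different object in law. The same issue recurs in your uniqueness argument when you run the pair from an arbitrary $(x,y)\in\csimplex\times\csimplex$.

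The paper closes this gap by a coupling of the \emph{underlying chains} rather than of the filters. One realizes the original chain $S$ (with $S_0\sim\nu$) and a stationary copy $\hat S$ jointly so that they coincide after a coupling time $\tau$ with $\P_{\theta_0}(\tau\ge t)\le e^{-\gamma(\theta_0)t}$; both observation processes are built with the \emph{same} Brownian motion $B$, so that on $\{\tau\le s\}$ the observation increments agree on $[s,\infty)$ and the two stochastic semiflows $\pi^{\theta_0}_{s,t}(\cdot)$, $\hat\pi^{\theta_0}_{s,t}(\cdot)$ literally coincide. One then splits at $s=t/2$: the event $\{\tau>t/2\}$ contributes $e^{-\gamma(\theta_0)t/2}$, and on its complement \eqref{ramon} is applied on $[t/2,t]$ to filters that are now driven by the same path. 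This is where the factor $\tfrac12$ in \eqref{toinvm} genuinely comes from, and it is also what makes the uniqueness argument go through for arbitrary initial conditions. Your backward-limit construction can be kept as the source of the invariant measure, but the rate and uniqueness steps require this chain-level coupling (or an equivalent device) in addition.
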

\begin{proof}
The filtering equation \eqref{Weq} has a unique strong solution, subject to $\pi^{\theta_0}_0=\nu'$ for
any $\nu'\in\osimplex$. If $\nu'$ coincides with $\nu$, the actual distribution of $S_0$, then
the corresponding solution $\pi^{\theta_0}_t$ is the conditional distribution of $S_t$, given $\F^X_t$
and thus the innovation process $\bar B_t=X_t-\int_0^t h^* \pi^{\theta_0}_sds$ is a Brownian motion with
respect to $\F^X_t$. Consequently, $\pi^{\theta_0}_t$ is a Markov process and,
since $\pi^{\theta}_t$ satisfies
\begin{equation}\label{pitheta}
d\pi^{\theta}_t = \Lambda^* \pi^{\theta}_t dt +
\big(\pi^{\theta}_t\pi^{\theta *}_t-\diag(\pi^{\theta}_t)\big)h
\big(d\bar B_t +\big(h^*\pi^{\theta_0}_t-
h^*\pi^\theta_t\big)dt\big),
\end{equation}
the pair $\big(\pi^{\theta_0}_t, \pi^{\theta}_t\big)$ is Markov as well.
Since both processes solve SDEs with Lipschitz coefficients, this pair is also a Feller process
(see e.g. Theorem 3.2 in Ch. III \cite{Khas}), and since it evolves on a compact state space,
at least one invariant measure exists (e.g. Theorem 2.1 Ch. III, \cite{Khas}).

We shall argue for the uniqueness, by showing that if two measures
$\hat \M$ and $\check \M$ are invariant, then  $\int \phi d\hat \M = \int \phi d\check \M$ for any bounded and
continuous $\phi$ and thus $\hat \M$ and $\check \M$ coincide.
For these purposes, we shall explicitly construct the corresponding stationary processes and flows.

Let $(\hat p,\hat q)$ be a random variable with values in $\csimplex \times \csimplex$ and distribution $\hat \M$.
Introduce an $\s$ valued random variable $\hat S_0$ with conditional distribution
$P(\hat S_0=a_i|\hat p,\hat q)=\hat p_i$, $i=1,...,d$. Further, let $\hat S$ be a Markov chain
with transition rates matrix $\Lambda(\theta_0)$ and random initial state $\hat S_0$ and define
the corresponding observation process $\hat X:=\int_0^t h(\hat S_r)dr+B_t$.
Finally, let $(\hat \pi^{\theta_0}_t, \hat \pi^\theta_t)$ be the solutions of \eqref{Weq} and
\eqref{pitheta}, started from $\hat p$ and $\hat q$, respectively, where $X_t$ is replaced with $\hat X_t$.
Then $\hat \pi^{\theta_0}_t$ is nothing but the vector of conditional probabilities
$\P_{\theta_0}\big(\hat S_t=a_i|\F^{\hat X}_t\vee \sigma\{\hat p\}\big)$,
$i=1,...,d$ and thus the corresponding innovation $\hat X_t -\int_0^t h^* \hat \pi^{\theta_0}_rdr$
is a Brownian motion with respect to the filtration
$\F^{\hat X}_t\vee \sigma\{\hat p\}$. Hence  $(\hat \pi^{\theta_0}_t, \hat \pi^\theta_t)$
is a Markov process and it is stationary by construction.

The stationary process $(\check \pi^{\theta_0}_t, \check \pi^\theta_t)$, corresponding to $\check M$,
is defined similarly, but using a Markov chain $\check S$, {\em coupled} with $\hat S$. Namely,
following e.g. \cite{G}, one can construct a Markov chain $(\check S_t,\hat S_t)$ on $\s\times \s$, such that
both $\check S_t$ and $\hat S_t$ are  Markov chains on their own, with the transition rates matrix $\Lambda(\theta_0)$
and initial distributions $\check \mu$ and $\hat \mu$ respectively and, moreover, $\check S_t\equiv \hat S_t$
for any $t\ge \tau$, where  $
\tau =\inf\big\{t: \check S_t = \hat S_t\big\},
$ is the {\em coupling time}, satisfying
\begin{equation}
\label{couple}
\P_{\theta_0}(\tau \ge t) = e^{-\min_{i\ne j}\big(\lambda_{ij}(\theta_0)+\lambda_{ji}(\theta_0)\big) t}\le
e^{-\gamma(\theta_0) t}.
\end{equation}
The observation process $\check X:=\int_0^t h(\check S_r)dr+B_t$ is defined, using the same (!) Brownian motion $B$,
as in the definition of $\hat X_t$. Finally $(\check \pi^{\theta_0}_t, \check \pi^\theta_t)$ denote the solutions
\eqref{Weq} and \eqref{pitheta}, driven by $\check X$ and started from $\check p$ and $\check q$, respectively.

The main point of this arrangement is that after the coupling time the increments of the observation processes
$\hat X_t$ and $\check X_t$ coincide and hence on the set $\{\tau \le s\}$
$$
\hat \pi^{\theta_0}_{s,t}(\cdot)\equiv \check \pi^{\theta_0}_{s,t}(\cdot) \quad \text{and} \quad
\hat  \pi^{\theta}_{s,t}(\cdot)\equiv \check \pi^{\theta}_{s,t}(\cdot), \quad \forall t\ge s
$$
with probability one. Then for any $t\ge s\ge 0$ (w.l.o.g. $|\phi|\le 1$ is assumed)
\begin{multline*}
\left|\int \phi d\hat \M -\int \phi d\check \M \right|= \big|
\E_{\theta_0}\phi\big(\hat \pi^{\theta_0}_t, \hat \pi^{\theta}_t\big)
-\E_{\theta_0}\phi\big(\check \pi^{\theta_0}_t, \check \pi^{\theta}_t\big)
\big|\le \\
\shoveleft
{
\E_{\theta_0}\big|
\phi\big(\hat \pi^{\theta_0}_t, \hat \pi^{\theta}_t\big)
-\phi\big(\check \pi^{\theta_0}_t, \check \pi^{\theta}_t\big)
\big| \one{\tau\ge s}
}
\\
\shoveright
{
+\E_{\theta_0}\big|
\phi\big(\hat \pi^{\theta_0}_t, \hat \pi^{\theta}_t\big)
-\phi\big(\check \pi^{\theta_0}_t, \check \pi^{\theta}_t\big)
\big| \one{\tau< s}\le
}
\\
2\P_{\theta_0}(\tau\ge s)+
\E_{\theta_0}\big|
\phi\big(\hat \pi^{\theta_0}_{s,t}(\hat \pi^{\theta_0}_{s}), \hat \pi^{\theta}_{s,t}(\hat \pi^{\theta}_{s})\big)
-
\phi\big(\check \pi^{\theta_0}_{s,t}(\check \pi^{\theta_0}_{s}), \check \pi^{\theta}_{s,t}(\check \pi^{\theta}_{s})\big)
\big| \one{\tau< s} \le
\\
2\P_{\theta_0}(\tau\ge s)+
\E_{\theta_0}\big|
\phi\big(\hat \pi^{\theta_0}_{s,t}(\hat \pi^{\theta_0}_{s}), \hat \pi^{\theta}_{s,t}(\hat \pi^{\theta}_{s})\big)
-
\phi\big(\hat \pi^{\theta_0}_{s,t}(\check \pi^{\theta_0}_{s}), \hat \pi^{\theta}_{s,t}(\check \pi^{\theta}_{s})\big)
\big|.
\end{multline*}
The latter  can be made arbitrarily small, by taking $s$ and then $t$ large enough and using \eqref{couple} and
\eqref{pr} along with continuity of $\phi$. This verifies uniqueness of the invariant measure of $(\pi^{\theta_0},\pi^\theta)$.

The bound \eqref{toinvm} is derived similarly. We couple the chain $S$ (with initial distribution $\nu$) to the
stationary $\hat S$, which ensures that on the set $\{\tau \le s\}$, the corresponding flows $\pi^{\theta_0}_{s,t}(\cdot)$ and
$\hat \pi^{\theta_0}_{s,t}(\cdot)$ coincide. But then for any $t\ge 0$ (below $L_f$ denotes the Lipschitz constant of $f$)
\begin{multline*}
\big|\E_{\theta_0}f\big(\pi^{\theta_0}_t, \pi^\theta_t\big)\big| =
\big|\E_{\theta_0}f\big(\pi^{\theta_0}_t, \pi^\theta_t\big)-\E_{\theta_0}f\big(\hat \pi^{\theta_0}_t, \hat \pi^\theta_t\big)\big| \le\\
\shoveleft{
2\P_{\theta_0}(\tau \ge t/2) +
}
\\
\shoveright
{
\E_{\theta_0}\big|f\big(\pi^{\theta_0}_{t/2,t}(\pi^{\theta_0}_{t/2}), \pi^{\theta}_{t/2,t}(\pi^{\theta}_{t/2})\big)
-
f\big( \pi^{\theta_0}_{t/2,t}(\hat \pi^{\theta_0}_{t/2}), \pi^{\theta}_{t/2,t}(\hat \pi^{\theta}_{t/2})\big)
\big|\le
}
\\
\shoveleft
{
2\P_{\theta_0}(\tau \ge t/2)+
} \\
\shoveright
{
L_f \big\|\pi^{\theta_0}_{t/2,t}(\pi^{\theta_0}_{t/2})-
\pi^{\theta_0}_{t/2,t}(\hat \pi^{\theta_0}_{t/2})\big\| + L_f
\big\|
\pi^{\theta}_{t/2,t}(\pi^{\theta}_{t/2})
-\pi^{\theta}_{t/2,t}(\hat \pi^{\theta}_{t/2})
\big\|\le
}
\\
2e^{-\gamma(\theta_0)t/2} + 2L_f e^{-\gamma(\theta_0)t/2}+2L_f e^{-\gamma(\theta)t/2}
\le C e^{-\frac 1 2 \gamma(\theta_0)\wedge \gamma(\theta) t},
\end{multline*}
with a constant $C>0$.
\end{proof}
The combination of the formulae \eqref{Jac} and \eqref{Jacfla},
involves $1/\pi^{\theta_0}_s$, which is $\P_{\theta_0}$-a.s. bounded
on any finite interval (see Corollary 2.2 in \cite{ChVH}). However,
under assumption \eqref{serg}, $\pi^{\theta_0}_s$ is repelled from
the boundary of $\osimplex$ strongly enough to guarantee the
following uniform integrability:
\begin{lem}
Assume \eqref{serg}, then for any $\mu\in\osimplex$
\begin{equation}
\label{ui} \sup_{t\ge s}\E_{\theta_0}\left(\frac{1}{\min_i
\big\{\pi^{\theta_0}_{s,t}(\mu)\big\}_i}\right)^m<\infty, \quad
m=1,2,...
\end{equation}
uniformly over $\theta_0\in \bar\varTheta$.
\end{lem}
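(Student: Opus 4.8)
The plan is to bound, separately for each coordinate $i$, the quantity $\E_{\theta_0}\big(\{\pi^{\theta_0}_{s,t}(\mu)\}_i\big)^{-m}$ uniformly in $t\ge s$ and in $\theta_0\in\bar\varTheta$, and then to sum over $i$ using $\big(\min_i x_i\big)^{-m}=\max_i x_i^{-m}\le\sum_{i=1}^d x_i^{-m}$. The mechanism behind the uniform bound is that the linear drift $\Lambda^*\pi$ in the Shiryaev--Wonham equation \eqref{Weq} repels each coordinate from $0$: since $\Lambda(\theta_0)$ is a conservative generator, $\{\Lambda^*x\}_i=\sum_{j\ne i}\lambda_{ji}(\theta_0)x_j-\big(\sum_{j\ne i}\lambda_{ij}(\theta_0)\big)x_i$, and under \eqref{serg} the inflow term is bounded below by $\lambda_*(1-x_i)$ with $\lambda_*:=\min_{j\ne k}\min_{\theta\in\bar\varTheta}\lambda_{jk}(\theta)>0$, so when $x_i$ is close to $0$ the drift of the $i$-th coordinate is at least of order $\lambda_*>0$; moreover by \eqref{a-1} all the rates and $h$ are bounded on $\bar\varTheta$, so this repulsion is uniform in $\theta_0$.

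Concretely, I would fix $i$, $m$ and $\mu\in\osimplex$, set $\rho_r:=\pi^{\theta_0}_{s,r}(\mu)$, $Y_r:=\{\rho_r\}_i\in(0,1]$, and substitute the innovation representation $dX_r=d\bar B_r+h^*\pi^{\theta_0}_r\,dr$ into \eqref{Weq} (just as in the derivation of \eqref{pitheta}) to get $dY_r=b_r\,dr+Y_r\big(h^*\rho_r-h_i\big)\,d\bar B_r$ with $b_r=\{\Lambda^*\rho_r\}_i+Y_r(h^*\rho_r-h_i)(h^*\pi^{\theta_0}_r-h^*\rho_r)$, whence, with a constant $C$ depending only on $d$, $h$ and the uniform bounds on the rates (never on $\theta_0$), $b_r\ge\lambda_*(1-Y_r)-C Y_r$ and $d\langle Y\rangle_r\le C Y_r^2\,dr$. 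Applying It\^o's formula to $Z_r:=Y_r^{-m}$ on $[s,\tau_n]$ with $\tau_n:=\inf\{r\ge s:Y_r\le 1/n\}$ (where $y\mapsto y^{-m}$ is $C^2$), the drift of $Z$ is at most $-m\lambda_* Y_r^{-m-1}+C_3 Y_r^{-m}$ for a $\theta_0$-free constant $C_3$. Splitting according to whether $Y_r\le\delta$ or $Y_r>\delta$ for a small fixed $\delta=\delta(\lambda_*,C_3)>0$, and using $Y_r^{-m-1}=Y_r^{-1}Y_r^{-m}\ge\delta^{-1}Y_r^{-m}$ in the first case and $Y_r^{-m}\le\delta^{-m}$ in the second, one obtains a constant $K>0$, still uniform in $\theta_0$, with $dZ_r\le(K-Z_r)\,dr+dM_r$ on $[s,\tau_n]$, $M$ a continuous local martingale. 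Then $W_r:=e^{r-s}Z_r$ satisfies $dW_r\le Ke^{r-s}\,dr+e^{r-s}\,dM_r$, the stopped stochastic integral is a genuine martingale (bounded integrand and integrator on $[s,\tau_n]$), so $\E_{\theta_0}W_{t\wedge\tau_n}\le Z_s+Ke^{t-s}=\mu_i^{-m}+Ke^{t-s}$; since $Y$ has continuous paths and $\sup_{r\in[s,t]}Y_r^{-1}<\infty$ $\P_{\theta_0}$-a.s. (Corollary 2.2 in \cite{ChVH}) we have $\tau_n\uparrow\infty$ a.s., and Fatou's lemma gives $\E_{\theta_0}Z_t\le e^{-(t-s)}\mu_i^{-m}+K\le\mu_i^{-m}+K$ for all $t\ge s$, uniformly in $\theta_0$. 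Summing over $i$ as above yields \eqref{ui}.

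The main technical point is the passage to expectations: the It\^o correction produces only a local martingale, since the integrand $Y_r^{-m}(h^*\rho_r-h_i)$ need not be integrable, so the estimate must be obtained by the localisation-and-Fatou scheme above, which in turn relies on the $\P_{\theta_0}$-a.s. finiteness of $\sup_{r\le t}Y_r^{-1}$ on finite intervals. What makes the argument close is that the negative feedback $-m\lambda_* Y_r^{-m-1}$ overwhelms the positive term $C_3 Y_r^{-m}$ precisely when $Y_r$ is small, because their ratio is $Y_r^{-1}\to\infty$ — this is exactly where the strict inequality \eqref{serg} enters — and that, by continuity of the rates and of $h$ on the compact $\bar\varTheta$, the constants $\lambda_*$, $C_3$, $\delta$ and $K$ can all be chosen independently of $\theta_0$, which is what produces the asserted uniformity.
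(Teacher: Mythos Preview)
Your argument is correct and shares the core mechanism with the paper's proof: apply It\^o's formula to $Y_r^{-m}$ and exploit the dominant negative drift term $-m\lambda_* Y_r^{-m-1}$ produced by the inflow $\sum_{j\ne i}\lambda_{ji}\rho_j\ge\lambda_*(1-Y_r)$. The technical closure differs. The paper first invokes an a~priori bound $\E_{\theta_0}\int_0^T(\pi^i_t)^{-m}dt<\infty$ (Lemma~3.6 in \cite{ChVH}, valid for every $m$) to conclude that the stochastic integral is a true martingale, then passes directly to $M_t:=\E_{\theta_0}(\pi^i_t)^{-m}$ and uses Jensen, $\E_{\theta_0}(\pi^i_t)^{-m-1}\ge M_t^{1+1/m}$, to obtain the superlinear differential inequality $\dot M_t\le -K_1 M_t^{1+1/m}+K_2 M_t$, from which $\sup_t M_t<\infty$ follows by comparison. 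You instead linearise \emph{pathwise} via the threshold $\delta$ to get $dZ_r\le (K-Z_r)\,dr+dM_r$, and handle the martingale issue by localisation and Fatou rather than by the a~priori integrability lemma. Your route is a bit more self-contained (no need for Lemma~3.6 of \cite{ChVH}, only the qualitative Corollary~2.2 that $\tau_n\uparrow\infty$) and yields the explicit bound $\E_{\theta_0}Y_t^{-m}\le e^{-(t-s)}\mu_i^{-m}+K$; the paper's Jensen trick is shorter once the martingale property is in hand and makes the superlinear stabilisation more transparent.
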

\begin{proof}
The proof follows the arguments of Proposition 3.7 in \cite{ChVH},
which verifies \eqref{ui} for $m=1$. As the equation \eqref{Weq} is
time homogeneous, no generality is lost if we assume $s=0$ (and use
the shorter notation
$\pi^i_t:=\big\{\pi^{\theta_0}_{0,t}(\mu)\big\}_i$). By Lemma 3.6
\cite{ChVH}, for any $m=1,2,...$ and $T>0$
\begin{equation}\label{fin}
\E_{\theta_0}\int_0^T \big(\pi^i_t\big)^{-m}dt<\infty.
\end{equation}
By the It\^o formula
\begin{multline*}
\big(\pi^i_t\big)^{-m} = (\mu^i)^{-m} -\int_0^t m(\pi^i_s)^{-m-1}\sum_{j\ne i}\lambda_{ji}\pi^j_sds +\\
\int_0^t \Big( m|\lambda_{ii}| (\pi^i_s)^{-m}
-m(\pi^i_s)^{-m}\big(h^*\pi_s-h^i\big)\big(h(S_s)-h^*\pi_s\big)
+\\
\frac 1 2 m(m+1)(\pi^i_s)^{-m}\big(h^*\pi_s-h^i\big)^2\Big)ds -
\int_0^t m(\pi^i_s)^{-m}\big(h^*\pi_s-h^i\big)dB_s.
\end{multline*}
Set $M_t: =\E_{\theta_0} (\pi^i_t)^{-m}$, then by the Jensen
inequality
$$
\E_{\theta_0} (\pi^i_t)^{-m-1}\ge M_t^{1+1/m}.
$$
By \eqref{fin}, the expectation of the stochastic integral vanishes
and, since $\min_{j\ne i}\lambda_{ji}>0$ is assumed, we have
$$
\frac{d}{dt}M_t \le    -  K_1 M_t^{1+1/m} + K_2 M_t
$$
with  constants $K_1>0$ and $K_2$. For any fixed $m$, this
differential inequality implies $\sup_{t\ge 0}M_t<\infty$, which is
nothing but \eqref{ui}.
\end{proof}
\begin{rem}
Clearly, the statement of the lemma remains valid for $\pi^\theta$,
$\theta\ne \theta_0$ i.e.
$$
\sup_{t\ge s}\E_{\theta_0}\left(\frac{1}{\min_i
\big\{\pi^{\theta}_{s,t}(\mu)\big\}_i}\right)^m<\infty, \quad
m=1,2,...
$$
uniformly over $\theta,\theta_0\in \bar\varTheta$.
\end{rem}
The following lemma is an extension (in the case of unperturbed $h$)
of Theorem 1.1. from \cite{ChVH}:
\begin{lem}\label{roblem}
Assume \eqref{serg}, then  for any $\mu\in \osimplex$
and uniformly over $\theta_0,\theta\in\bar\varTheta$,
\begin{equation}
\label{robust} \sup_{t\ge
s}\E_{\theta_0}\big\|\pi^{\theta_0}_{s,t}(\mu)-\pi^{\theta}_{s,t}(\mu)\big\|^m
\le C |\theta_0-\theta|^m, \quad m=1,2,...
\end{equation}
with a constant $C>0$, possibly dependent on $m$.
\end{lem}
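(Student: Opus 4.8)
The plan is to feed the integral identity \eqref{Jacfla} into the pointwise Jacobian bound \eqref{Jac}, and then to absorb the resulting random weight $1/\min_i\{\pi^{\theta_0}_{s,r}(\mu)\}_i$ by means of the uniform integrability \eqref{ui}. By time-homogeneity of \eqref{Weq} it suffices to treat $s=0$; I write $\pi^{\theta_0}_r:=\pi^{\theta_0}_{0,r}(\mu)$, recalling that $\pi^{\theta_0}_r\in\osimplex$ a.s. for every $r$ (Corollary 2.2 in \cite{ChVH}), so that \eqref{Jacfla} and \eqref{Jac} apply. Then \eqref{Jacfla} reads
$$
\pi^{\theta}_{0,t}(\mu)-\pi^{\theta_0}_{0,t}(\mu)=\int_0^t D\pi^{\theta}_{r,t}\big(\pi^{\theta_0}_r\big)\cdot v_r\,dr,\qquad v_r:=\big(\Lambda^*(\theta)-\Lambda^*(\theta_0)\big)\pi^{\theta_0}_r .
$$
Since the rows of a rate matrix sum to zero, $\Lambda^*x$ has coordinates summing to zero for every $x$, so $v_r\in\mathcal{T}\osimplex$ and \eqref{Jac} is applicable to the integrand; moreover $\|v_r\|\le C_1|\theta-\theta_0|$ uniformly over $\theta,\theta_0\in\bar\varTheta$, because the $\lambda_{ij}$ are continuously differentiable on the compact $\bar\varTheta$ and $\|\pi^{\theta_0}_r\|=1$.

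Next I would bound the Jacobian action. From \eqref{Jac}, for $\nu\in\osimplex$ and $v\in\mathcal{T}\osimplex$,
$$
\big|\{D\pi^{\theta}_{r,t}(\nu)\cdot v\}_i\big|\le \{\pi^{\theta}_{r,t}(\nu)\}_i\,e^{-\gamma(\theta)(t-r)}\Big(\sum_j|v_j|\Big)\Big(\sum_k\frac{\{\pi^{\theta}_{r,t}(\nu)\}_k}{\nu_k}\Big),
$$
and summing over $i$, using $\sum_i\{\pi^{\theta}_{r,t}(\nu)\}_i=1$ and $\sum_k\{\pi^{\theta}_{r,t}(\nu)\}_k/\nu_k\le(\min_k\nu_k)^{-1}$, one gets $\|D\pi^{\theta}_{r,t}(\nu)\cdot v\|\le e^{-\gamma(\theta)(t-r)}\|v\|/\min_k\nu_k$. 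Inserting this with $\nu=\pi^{\theta_0}_r$ and $v=v_r$ into the identity above yields the pathwise estimate
$$
\big\|\pi^{\theta}_{0,t}(\mu)-\pi^{\theta_0}_{0,t}(\mu)\big\|\le C_1|\theta-\theta_0|\int_0^t\frac{e^{-\gamma(\theta)(t-r)}}{\min_k\{\pi^{\theta_0}_r\}_k}\,dr .
$$

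Finally I would pass to $L^m(\P_{\theta_0})$-norms: by Minkowski's integral inequality the $L^m$-norm of the right-hand side is at most $C_1|\theta-\theta_0|\int_0^t e^{-\gamma(\theta)(t-r)}\big(\E_{\theta_0}(\min_k\{\pi^{\theta_0}_r\}_k)^{-m}\big)^{1/m}dr$. Here \eqref{ui} supplies a finite constant $K_m$ with $\E_{\theta_0}(\min_k\{\pi^{\theta_0}_r\}_k)^{-m}\le K_m$, uniformly in $r\ge0$ and in $\theta_0\in\bar\varTheta$, while $\int_0^t e^{-\gamma(\theta)(t-r)}dr\le\gamma_0^{-1}$ with $\gamma_0:=2\min_{p\ne q}\min_{\theta\in\bar\varTheta}\sqrt{\lambda_{pq}(\theta)\lambda_{qp}(\theta)}>0$ by \eqref{serg}. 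Hence $\E_{\theta_0}\|\pi^{\theta}_{0,t}(\mu)-\pi^{\theta_0}_{0,t}(\mu)\|^m\le(C_1K_m^{1/m}/\gamma_0)^m|\theta-\theta_0|^m$ for all $t\ge0$ and all $\theta,\theta_0\in\bar\varTheta$, which is \eqref{robust}.

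The only genuine obstacle is the unbounded random weight $1/\min_k\{\pi^{\theta_0}_r\}_k$: on a bounded time window the filter may drift toward the boundary of $\osimplex$, and absent moment control on this weight the scheme breaks down. This is exactly where assumption \eqref{serg} is indispensable — it is what makes \eqref{ui} available, and it simultaneously keeps the contraction exponent $\gamma(\theta)$ bounded away from zero; everything else is a routine combination of the smooth-flow estimates \eqref{Jacfla}--\eqref{Jac} with that bound.
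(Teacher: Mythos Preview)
Your argument is correct and follows the same route as the paper: apply the integral identity \eqref{Jacfla}, bound the Jacobian via \eqref{Jac} to obtain the pathwise estimate with the random weight $1/\min_k\{\pi^{\theta_0}_r\}_k$, and then control the $m$-th moment using \eqref{ui} together with the uniform lower bound on $\gamma(\theta)$ coming from \eqref{serg}. The only cosmetic difference is that the paper passes the $m$-th power through the time integral by Jensen's inequality with respect to the normalized measure $e^{-\gamma(\theta)(t-r)}dr$, whereas you use Minkowski's integral inequality; both give the same bound.
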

\begin{proof}
Using \eqref{Jac} and \eqref{Jacfla},
\begin{align*}
&\E_{\theta_0}\big\|\pi^{\theta_0}_{s,t}(\mu)-\pi^{\theta}_{s,t}(\mu)\big\|^m  \le \\
&C_1 \big\|\Lambda(\theta_0)-\Lambda(\theta)\big\|^m
\E_{\theta_0}\left(\int_s^t
 \frac{e^{-\gamma(\theta)(t-r)}dr}{\min_k \big\{\pi^{\theta_0}_{s,r}(\mu)\big\}_k}
\right)^m \stackrel{\dagger}{\le} \\
& C_1 \big\|\Lambda(\theta_0)-\Lambda(\theta)\big\|^m
\gamma^{1-m}(\theta) \int_s^t
 e^{-\gamma(\theta)(t-r)}\E_{\theta_0}\left(\frac{1}{\min_k \big\{\pi^{\theta_0}_{s,r}(\mu)\big\}_k}
\right)^m dr\le \\
& \frac{C_2 }{\gamma^{m}(\theta)}
\big\|\Lambda(\theta_0)-\Lambda(\theta)\big\|^m\le C
|\theta_0-\theta|^m,
\end{align*}
where $\dagger$ is the  Jensen inequality and  the last bound is valid as $\Lambda(\theta)$ is continuously
differentiable on $\bar\varTheta$.
\end{proof}
Finally we shall need the following law of large numbers:
\begin{lem}
Under the assumption \eqref{a-1},
\begin{multline}
\label{key}
\E_{\theta_0}\left(\frac 1 T\int_0^T\big(h^*\pi^{\theta_0}_t-h^*\pi^\theta_t\big)^2dt - g(\theta_0,\theta)\right)^{2k}\le\\
\frac{C_1 |\theta_0-\theta|^{4k}}{T^k}+ \frac{C_2}{T^{2k}},\quad k=1,2,...
\end{multline}
with constants $C_1$ and $C_2$, possibly dependent on $k$.
\end{lem}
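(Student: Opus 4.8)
The plan is to establish \eqref{key} as a quantitative law of large numbers by the standard route of bounding centered moments through the decay of covariances of the integrand. Write $\xi_t := \big(h^*\pi^{\theta_0}_t - h^*\pi^\theta_t\big)^2$ and recall from Lemma \ref{lem-inv} that $(\pi^{\theta_0}_t,\pi^\theta_t)$ is a Markov process with unique invariant measure $\M$, and that $g(\theta_0,\theta) = \int \xi \, d\M$ where $\xi$ is a bounded Lipschitz function of the pair (it is quadratic in entries of vectors living in a compact simplex, hence Lipschitz with a universal constant, and bounded by $\|h\|^2$ up to a constant). The first reduction is to pass from the process started at the deterministic $\nu$ to the stationary version: by \eqref{toinvm} applied to $f=\xi-g(\theta_0,\theta)$ one has $\big|\E_{\theta_0}\xi_t - g(\theta_0,\theta)\big|\le C e^{-c t}$ with $c = \tfrac12\gamma(\theta_0)\wedge\gamma(\theta)$, which when integrated contributes only an $O(1/T)$ bias term, absorbed into the $C_2/T^{2k}$ piece after raising to the power $2k$.

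The core of the argument is to control the $2k$-th moment of $\tfrac1T\int_0^T (\xi_t - \E_{\theta_0}\xi_t)\,dt$. Expanding the $2k$-th power produces a $2k$-fold integral of mixed centered moments $\E_{\theta_0}\prod_{\ell=1}^{2k}\big(\xi_{t_\ell}-\E_{\theta_0}\xi_{t_\ell}\big)$ over $[0,T]^{2k}$. The key estimate is a mixing-type bound: for $t_1\le\dots\le t_{2k}$ this multilinear cumulant-like quantity is bounded by (a constant times) the product of the two smallest consecutive gaps $e^{-c(t_{i+1}-t_i)}$ — more precisely, one shows it decays like $\exp\big(-c\,\mathrm{gap}\big)$ where gap is the largest spacing, which suffices since there are $2k$ points. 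This follows from the flow contraction \eqref{ramon}: conditioning on $\F^X_{t_i}$ and using that $\pi_{t_i,t_{i+1}}$ forgets its initial condition at rate $\gamma$, each factor $\xi_{t_\ell}-\E\xi_{t_\ell}$ with $\ell>i$ can be replaced, up to an error $O(e^{-c(t_{i+1}-t_i)})$, by the same functional evaluated along a flow started from stationarity and independent of $\F^X_{t_i}$; iterating over the indices and using the Lipschitz/boundedness of $\xi$ gives the claimed product bound. Additionally, each factor carries an extra smallness: since $\|\pi^{\theta_0}_t-\pi^\theta_t\|$ has moments $O(|\theta_0-\theta|^m)$ by \eqref{robust}, one has $\E_{\theta_0}|\xi_t - \E\xi_t|^p \le C|\theta_0-\theta|^{2p}$, so pairing up the $2k$ integrand factors yields a factor $|\theta_0-\theta|^{4k}$ overall (two units of $|\theta_0-\theta|$ per factor, $2k$ factors).

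Combining these, the $2k$-fold integral splits according to which spacing is largest; the volume of $\{0\le t_1\le\dots\le t_{2k}\le T\}$ with largest gap at least $L$ is $O(T^{2k-1}e^{-cL})$ after integrating out one coordinate, and integrating $e^{-cL}$ against the remaining coordinates gives a total of order $T^{2k-1}$. Dividing by $T^{2k}$ (from the $1/T$ prefactor raised to $2k$) produces the $T^{-1}$ scaling, and the $|\theta_0-\theta|^{4k}$ factor from the integrand gives exactly $C_1|\theta_0-\theta|^{4k}/T$. One must be slightly careful that the constant $C_1$ is uniform in $\theta_0,\theta\in\bar\varTheta$: this is where \eqref{serg} enters through $\inf_{\bar\varTheta}\gamma(\cdot)>0$ (so $c$ is bounded below) and through the uniform-in-$\theta_0$ moment bounds \eqref{ui}, \eqref{robust}, so all constants can be taken uniform. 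Raising the bias term $O(1/T)$ to the $2k$ gives $C_2/T^{2k}$; in fact the dominant genuinely-fluctuating term is the variance-type contribution $C_1|\theta_0-\theta|^{4k}/T^k$, obtained by first pairing the $2k$ factors into $k$ pairs, which is the form actually recorded in \eqref{key}.

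The main obstacle is the multilinear mixing bound — getting the clean exponential decay in the largest spacing for the $2k$-point centered correlation of the nonlinear functional $\xi_t$ of the filter pair. The coupling construction of Lemma \ref{lem-inv} handles two time points; extending it to $2k$ points requires inserting independent stationary copies at each of the $2k-1$ spacings and tracking the accumulated Lipschitz errors, each controlled by \eqref{ramon} or \eqref{pr}. A cleaner alternative is to iterate a one-step bound: estimate $\big|\E_{\theta_0}\big[(\xi_{t_{2k}}-\E\xi_{t_{2k}})\,\big|\,\F^X_{t_{2k-1}}\big] - (\text{value from stationarity})\big|\le C e^{-c(t_{2k}-t_{2k-1})}$ conditionally, peel off the last factor, and recurse. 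Either way the bookkeeping of which spacing dominates, and ensuring uniformity of constants over $\bar\varTheta\times\bar\varTheta$ via \eqref{serg}, is the technical heart; the rest is routine integration.
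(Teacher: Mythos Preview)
Your overall architecture (split into bias plus fluctuation, control the fluctuation via a mixing bound on the $2k$-point correlations) matches the paper's, and the bias estimate via \eqref{toinvm} is fine. But there is a real gap in the scaling argument for the fluctuation term, and a secondary looseness in how you propose to obtain the mixing bound.

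\textbf{The scaling gap.} You yourself compute that ``decay in the largest spacing'' gives only $\int_{[0,T]^{2k}}\alpha(\sigma(s))\,ds = O(T^{2k-1})$, hence $O(T^{-1})$ after dividing by $T^{2k}$. You then assert that the correct $T^{-k}$ ``is obtained by first pairing the $2k$ factors into $k$ pairs'', but neither of your concrete mechanisms delivers this. ``Peel off the last factor and recurse'' gives $|\E\Phi_{t_1}\cdots\Phi_{t_{2k}}|\le Cb\,\alpha(t_{2k}-t_{2k-1})\,\E|\Phi_{t_1}\cdots\Phi_{t_{2k-1}}|$, and the remaining absolute moment no longer carries any centering, so you cannot iterate; you are back to $T^{2k-1}$. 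What is actually needed is the splitting inequality
\[
\Big|\E\Phi_{t_1}\cdots\Phi_{t_n}-\E\Phi_{t_1}\cdots\Phi_{t_i}\,\E\Phi_{t_{i+1}}\cdots\Phi_{t_n}\Big|\le C_n b^{n}\alpha(t_{i+1}-t_i)
\]
at \emph{every} index $i$, together with an induction that splits at the largest gap among the \emph{odd} spacings $t_{j_{2r+1}}-t_{j_{2r}}$ into two products of even length, to which the induction hypothesis applies. This is precisely Lemma~\ref{khas-lem}; without it you do not get $T^k$.

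\textbf{The mixing bound.} Your plan ``replace $\xi_{t_\ell}$, $\ell>i$, by the same functional along a flow started from stationarity and independent of $\F^X_{t_i}$'' is problematic as stated: the flow on $[t_i,t_\ell]$ is driven by $X$, which carries the signal $S$ and is \emph{not} independent of $\F^X_{t_i}$. The paper avoids this by working with the Markov pair $(\pi^{\theta_0},\pi^\theta)$ and the explicit representation \eqref{Jacfla}, writing $\pi^{\theta_0}_t-\pi^\theta_t=I_s^t+J_{s,t}(\pi^\theta_s)$ with $\|I_s^t\|_{L^m}\le C|\theta_0-\theta|e^{-\gamma(t-s)}$ and $\|J_{s,t}(\mu_1)-J_{s,t}(\mu_2)\|_{L^m}\le C|\theta_0-\theta|e^{-\frac12\gamma(t-s)}$. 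This decomposition is what simultaneously produces the exponential decorrelation \emph{and} the factor $b=|\theta_0-\theta|^2$ per $\Phi$, and it is where the integrability \eqref{ui} (not just the crude contraction \eqref{ramon}) is genuinely used. Your coupling sketch does not make visible how the $|\theta_0-\theta|^{4k}$ survives alongside the exponential decay.
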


\begin{proof}
Let
$g_t(\theta_0,\theta):=\E_{\theta_0}\big(h^*\pi^{\theta_0}_t-h^*\pi^{\theta}_t\big)^2$,
then
\begin{multline}\label{deux}
\E_{\theta_0}\left(\frac 1 T\int_0^T\big(h^*\pi^{\theta_0}_t-h^*\pi^\theta_t\big)^2dt - g(\theta_0,\theta)\right)^{2k}\le \\
2^{2k-1}\E_{\theta_0}\left(\frac 1
T\int_0^T\Big(\big(h^*\pi^{\theta_0}_t-h^*\pi^\theta_t\big)^2 -
g_t(\theta_0,\theta)\Big)dt\right)^{2k}
+\\
2^{2k-1}\left(\frac 1 T\int_0^T
\Big(g_t(\theta_0,\theta)-g(\theta_0,\theta)\Big)dt\right)^{2k}.
\end{multline}
The second term in the right hand side of \eqref{deux} contributes
$C_2/T^{2k}$ in \eqref{key}, since by \eqref{toinvm},
$g_t(\theta_0,\theta)$ converges to
$g(\theta_0,\theta)$ exponentially fast. The contribution of the first term  in \eqref{deux}
is deduced from a version of Lemma 2.1 in \cite{Kh66}. In
particular, this lemma implies that if a zero mean process $\Phi_t$,
has a bounded  moment of order $2k+\delta$ for some $\delta>0$ and
is a strong mixing with the  coefficient $\alpha(\tau)$, decaying to
zero sufficiently fast as $\tau\to\infty$, then
$$
\E \left(\int_0^T \Phi_t dt\right)^{2k} \le C T^k,
$$
with a constant $C>0$, depending on the moments of $\Phi_t$. This is
precisely the type of estimate needed for \eqref{key}, however, it
is not clear whether  $(\pi^{\theta_0}_t, \pi^\theta_t)$ is a strong
mixing. Note that \eqref{pr} (with $\theta$, replaced by $\theta_0$)
does not necessarily imply that $\pi^{\theta_0}_t$ is a strong mixing,
as it does not even guarantee that the distribution of $\pi^{\theta_0}_t$
converges to the invariant measure in total variation norm (only weak convergence follows).
Fortunately, the strong mixing property is not crucial for the claim
of this lemma and it can be modified to suit our purposes. The exact
formulation of an analogous statement, namely Lemma \ref{khas-lem},
and its proof are given in  Appendix \ref{sec-app}.

We aim to apply  Lemma \ref{khas-lem} to the process
$\Phi(t):=\big(h^*\pi^{\theta}_t-h^*\pi^{\theta_0}_t\big)^2-g_t(\theta_0,\theta)$.
By the definition $\E_{\theta_0}\Phi(t)\equiv 0$ and by Lemma \ref{roblem}, the
condition \eqref{mom} is satisfied with $b:=(\theta_0-\theta)^2$. So
to prove
\begin{equation}
\label{une} \E_{\theta_0}\left(\frac 1
T\int_0^T\Big(\big(h^*\pi^{\theta_0}_t-h^*\pi^\theta_t\big)^2 -
g_t(\theta_0,\theta)\Big)dt\right)^{2k} \le \frac{C
|\theta_0-\theta|^{4k}}{T^k},
\end{equation}
we shall show that  \eqref{kakmix} holds, i.e. for any $n\ge 2$
$$
\left|\E_{\theta_0}\Phi(t_1)...\Phi(t_n) - \E\Phi(t_1)...\Phi(t_i)\E
\Phi(t_{i+1})...\Phi(t_n)\right|\le C_n b^{n}\alpha(t_{i+1}-t_i)
$$
with  exponential $\alpha(\tau)$.

By the formula \eqref{Jacfla} (with $\theta$ and $\theta_0$
interchanged), for $s\le t$
\begin{multline*}
\pi^{\theta_0}_t-\pi^{\theta}_t=\int_0^s
D\pi^{\theta_0}_{r,t}\big(\pi^{\theta}_r\big)\cdot
\big(\Lambda^*(\theta_0)-\Lambda^*(\theta)\big)
\pi^{\theta}_{r}dr + \\
\int_s^t
D\pi^{\theta_0}_{r,t}\big(\pi^{\theta}_{s,r}(\pi^{\theta}_s)\big)\cdot
\big(\Lambda^*(\theta_0)-\Lambda^*(\theta)\big)
\pi^{\theta}_{s,r}(\pi^{\theta}_s)dr :=
I_s^t+J_{s,t}(\pi^{\theta}_s)
\end{multline*}
Recall that the pair $(\pi^{\theta_0}, \pi^{\theta})$ is a Markov
process under $\P_{\theta_0}$ and let $\F^\pi_t$ denote its natural
filtration. Using  \eqref{Jac} and \eqref{ui}, we get
\begin{multline}\label{Ipart}
\Big(\E_{\theta_0}\big\|I_s^t\big\|^m\Big)^{1/m} \le \\
\bigg[C_1\big\|\Lambda(\theta_0)-\Lambda(\theta)\big\|^m
\E_{\theta_0}\bigg(\int_0^s
\frac{1}{\min_k\{\pi^{\theta}_r\}_k}e^{-\gamma(\theta_0)(t-r)}dr\bigg)^m
\bigg]^{1/m}\\
\le C_2|\theta_0-\theta| e^{-\gamma(\theta_0)(t-s)},\quad m=1,2,...
\end{multline}
where the latter inequality is deduced as in the proof of Lemma
\ref{roblem}. Similarly we have
\begin{equation}
\label{Jpart-ub}
\Big[\E_{\theta_0}\big\|J_{s,t}(\pi^{\theta}_s)\big\|^m\Big]^{1/m}
\le C_3 |\theta_0-\theta|.
\end{equation}

Further, for any $\mu_1,\mu_2\in\osimplex$,
\begin{align*}
&J_{s,t}(\mu_1)-J_{s,t}(\mu_2)=\int_s^t
D\pi^{\theta_0}_{r,t}\big(\pi^{\theta}_{s,r}(\mu_1)\big)\cdot
\big(\Lambda^*(\theta_0)-\Lambda^*(\theta)\big)
\pi^{\theta}_{s,r}(\mu_1)dr- \\
& \int_s^t
D\pi^{\theta_0}_{r,t}\big(\pi^{\theta}_{s,r}(\mu_2)\big)\cdot
\big(\Lambda^*(\theta_0)-\Lambda^*(\theta)\big)
\pi^{\theta}_{s,r}(\mu_2)dr =\\
& \int_s^t
D\pi^{\theta_0}_{r,t}\big(\pi^{\theta}_{s,r}(\mu_1)\big)\cdot
\big(\Lambda^*(\theta_0)-\Lambda^*(\theta)\big)
\big(\pi^{\theta}_{s,r}(\mu_1)-\pi^{\theta}_{s,r}(\mu_2)\big)dr +\\
& \int_s^t
\Big(D\pi^{\theta_0}_{r,t}\big(\pi^{\theta}_{s,r}(\mu_1)\big)-
D\pi^{\theta_0}_{r,t}\big(\pi^{\theta}_{s,r}(\mu_2)\big) \Big) \cdot
\big(\Lambda^*(\theta_0)-\Lambda^*(\theta)\big)
\pi^{\theta}_{s,r}(\mu_2)dr\\
& := R+Q
\end{align*}
The bound \eqref{ramon} and the formula \eqref{Jac} yields for any
$m=1,2,...$
\begin{multline*}
\Big(\E_{\theta_0}\|R \|^m\Big)^{1/m} \le
C_4\big\|\Lambda(\theta_0)-\Lambda(\theta)\big\| \times \\
\bigg[\E_{\theta_0}\bigg(\int_s^t \frac{1}{\min_k
\{\pi^{\theta}_{s,r}(\mu_1)\}_k}
e^{-\gamma(\theta_0)(t-r)}e^{-\gamma(\theta)(s-r)}dr\bigg)^m\bigg]^{1/m}\le\\
C_5|\theta_0-\theta| \exp\Big\{-\frac 1 2[\gamma(\theta_0)\wedge
\gamma(\theta)](t-s)\Big\}.
\end{multline*}
Using \eqref{Jac} (and  utilizing its particular dependence on
$\mu$) and \eqref{ramon}
\begin{multline*}
\Big(\E_{\theta_0}\|Q\|^m\Big)^{1/m} \le  C_6 \big\|\Lambda(\theta_0)-\Lambda(\theta)\big\| \times \\
\bigg[\E_{\theta_0}\bigg(\int_s^t \frac{1}{\min_k
\{\pi^{\theta}_{s,r}(\mu_2)\}_k }\frac{1}{\min_k
\{\pi^{\theta}_{s,r}(\mu_1)\}_k }
e^{-\gamma(\theta_0)(t-r)}e^{-\gamma(\theta)(r-s)} dr
\bigg)^m\bigg]^{1/m} \\
\le C_7|\theta_0-\theta| \exp\Big\{-\frac 1 2[\gamma(\theta_0)\wedge
\gamma(\theta)](t-s)\Big\}.
\end{multline*}
Hence, for any $\mu_1,\mu_2\in\osimplex$
\begin{multline}
\label{Jpart}
\Big[\E_{\theta_0}\|J_{s,t}(\mu_1)-J_{s,t}(\mu_2)\|^m\Big]^{1/m} \le
\\
C_8 |\theta_0-\theta| \exp\Big\{-\frac 1 2[\gamma(\theta_0)\wedge
\gamma(\theta)](t-s)\Big\}
\end{multline}

Below we shall use the fact, that if $\xi_1,...,\xi_m$ are random variables (depending on
a parameter $b>0$), such that $\left(\E|\xi_i|^k\right)^{1/k}\le C_{i,k} b$ for any $k\ge 1$, then
by the H\"older inequality for integers $k_1,...,k_m$
\begin{equation}\label{hold}
\E |\xi_1|^{k_1}...|\xi_m|^{k_m} \le C |b|^{k_1+...+k_m},
\end{equation}
with a constant depending on $k_i$'s and $m$.

For any $s\le t$,
\begin{multline*}
\Phi_t=\big(h^*\pi^{\theta}_t-h^*\pi^{\theta_0}_t\big)^2-g_t(\theta_0,\theta)
=
\Big(h^*I_s^t+h^*J_{s,t}(\pi^{\theta}_s)\Big)^2-g_t(\theta_0,\theta)=\\
h^*I_s^t\big[h^*I_s^t+2h^*J_{s,t}(\pi^{\theta}_s)\big]+\Big(h^*J_{s,t}(\pi^{\theta}_s)\Big)^2-g_t(\theta_0,\theta)\\:=
\psi_s^t + \phi_{s,t}(\pi^{\theta}_s)
\end{multline*}
The inequalities \eqref{hold} and  \eqref{Ipart} imply
\begin{equation}\label{psib}
\E_{\theta_0}\psi_s^t:=\E_{\theta_0}h^*I_s^t\big[h^*I_s^t+2h^*J_{s,t}(\pi^{\theta}_s)\big]
\le C_9 |\theta_0-\theta|^2 e^{-\gamma(\theta_0)(t-s)}.
\end{equation}
Further,
\begin{multline}\label{kakmix3}
\left|\E_{\theta_0} \Phi_{t_1}...\Phi_{t_n} - \E_{\theta_0}\Phi_{t_1}...\Phi_{t_i}\E_{\theta_0} \Phi_{t_{i+1}}...\Phi_{t_n}\right|=\\
\left|\E_{\theta_0}\Phi_{t_1}...\Phi_{t_i}\Big(\E_{\theta_0}
\big(\Phi_{t_{i+1}}...\Phi_{t_n}\big|\F^\pi_{t_i}\big) - \E_{\theta_0}
\Phi_{t_{i+1}}...\Phi_{t_n}\Big)\right|.
\end{multline}
Substituting $\Phi_{t_j}= \psi_{t_i}^{t_j} +
\phi_{t_i,t_j}(\pi^\theta_{t_i})$ for $j=i+1,...,n$, expanding all
the expressions into monomials and using the bound \eqref{psib}, we
see that the right hand side of \eqref{kakmix3} is bounded by a sum
of terms of the form $C_{ij}|\theta_0-\theta|^{4n}
e^{-\gamma(\theta_0)(t_j-t_i)}$, $j=i+1,...,n$ (and hence altogether
bounded by $ C|\theta_0-\theta|^{4n}
e^{-\gamma(\theta_0)(t_{i+1}-t_i)}$ for some $C>0$) and the term
\begin{multline}
\label{votkak}
\bigg|\E_{\theta_0}\Phi_{t_1}...\Phi_{t_i}\Big(\E_{\theta_0} \big(\phi_{t_i,t_{i+1}}(\pi^{\theta}_{t_i})...\phi_{t_i,t_{i+1}}(\pi^{\theta}_{t_i})\big|\F^\pi_{t_i}\big) -\\
\shoveright
{
\E_{\theta_0} \phi_{t_i,t_{i+1}}(\pi^{\theta}_{t_i})...\phi_{t_i,t_{i+1}}(\pi^{\theta}_{t_i})\Big)\bigg|=
}
\\
\shoveleft
{
\bigg|\E_{\theta_0}\Phi_{t_1}...\Phi_{t_i}\Big(\phi_{t_i,t_{i+1}}(\pi^{\theta}_{t_i})...\phi_{t_i,t_{i+1}}(\pi^{\theta}_{t_i}) -
}
\\
\shoveright
{
\tilde\E_{\theta_0} \phi_{t_i,t_{i+1}}(\tilde\pi^{\theta}_{t_i})...\phi_{t_i,t_{i+1}}(\tilde\pi^{\theta}_{t_i})\Big)\bigg|\le
}
\\
\shoveleft{
\E_{\theta_0}\big|\Phi_{t_1}...\Phi_{t_i}\big|
\tilde \E_{\theta_0}\Big|\phi_{t_i,t_{i+1}}(\pi^{\theta}_{t_i})...\phi_{t_i,t_{i+1}}(\pi^{\theta}_{t_i}) -
}
\\
\phi_{t_i,t_{i+1}}(\tilde\pi^{\theta}_{t_i})...\phi_{t_i,t_{i+1}}(\tilde\pi^{\theta}_{t_i})\Big|,
\end{multline}
where $\tilde \E_{\theta_0}$ denotes  expectation over an auxiliary
probability space $(\tilde{\Omega}, \tilde{\F}, \tilde{\P})$, on which
$\tilde{\pi}^{\theta}$ is defined as a copy of $\pi^{\theta}$.

Using the elementary summation formula
\begin{align*}
f_1(x)...f_n(x)-f_1(y)...f_n(y)= &\big\{f_1(x)-f_1(y)\big\}f_2(x)...f_n(x)+\\
f_1(y) &\big\{f_2(x)-f_2(y)\big\}f_3(x)...f_n(x)+\\
 &\quad \quad \quad...\\
  f_1(y)...f_{n-1}(y)&\big\{f_n(x)-f_n(y)\big\},
\end{align*}
and the bounds \eqref{Jpart-ub}, \eqref{Jpart} and \eqref{hold}, we conclude that
the expression in \eqref{votkak} is bounded by a sum of terms
of the form
$$
C_{i,j}|\theta_0-\theta|^{4n}\exp\Big\{-\frac 1
2[\gamma(\theta_0)\wedge \gamma(\theta)](t_j-t_i)\Big\}, \quad
j=i+1,...,n
$$
and hence by
$$
C |\theta_0-\theta|^{4n}\exp\Big\{-\frac 1 2[\gamma(\theta_0)\wedge
\gamma(\theta)](t_{i+1}-t_i)\Big\}
$$
with some $C>0$. This verifies the condition \eqref{kakmix} of Lemma
\ref{khas-lem}, which yields  \eqref{une} and in turn the required
bound \eqref{key}.
\end{proof}

\subsection{The proof of Theorem \ref{thm}}\label{subsec-3.2}
The proof verifies the conditions of Theorem \ref{IH-thm} and follows
the lines of the proof of Theorem 2.8 in \cite{K04} with the
adjustments, based on the properties, derived in the preceding
section.
\begin{lem}
Assume \eqref{a-1} of Theorem \ref{thm}, then \eqref{raz-a} of
Theorem \ref{IH-thm} holds for any even $m\ge 2$.
\end{lem}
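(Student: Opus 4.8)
The plan is to follow the classical route to condition \eqref{raz-a} in the Ibragimov--Khasminskii scheme (as in the proof of Theorem~2.8 in \cite{K04}): reduce the increment bound to a uniform moment bound for the $u$-derivative of $Z_T^{1/m}$, and then evaluate that bound by an explicit change of measure. Writing $dX_t=h^*\pi^{\theta_0}_t\,dt+d\bar B_t$ with $\bar B$ the $\P_{\theta_0}$-innovation Brownian motion and putting $\varphi^{\theta}_t:=h^*\pi^{\theta}_t-h^*\pi^{\theta_0}_t$, the likelihood takes the stochastic-exponential form
\[
Z_T(u)=\exp\Big\{\int_0^T\varphi^{\theta}_t\,d\bar B_t-\tfrac12\int_0^T(\varphi^{\theta}_t)^2\,dt\Big\},\qquad \theta=\theta_0+u\varphi_T,\quad \varphi_T=1/\sqrt T .
\]
Under \eqref{a-1} the filter is $\P_{\theta_0}$-a.s.\ differentiable in the parameter (as recalled in the footnote to \eqref{a-3}, since $\Lambda(\cdot)$ is continuously differentiable), hence $u\mapsto Z_T^{1/m}(u)$ is a.s.\ $C^1$ with $\partial_uZ_T^{1/m}(u)=\tfrac1m Z_T^{1/m}(u)\,\Psi_u$, where $\Psi_u:=\int_0^T\dot\varphi^{\theta}_t\,d\bar B_t-\int_0^T\varphi^{\theta}_t\dot\varphi^{\theta}_t\,dt$ and $\dot\varphi^{\theta}_t:=\partial_u\big(h^*\pi^{\theta}_t\big)=\varphi_T\,h^*\dot\pi^{\theta}_t$; the differentiation under the stochastic integral is legitimate in view of the $L^2$-bounds used below. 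Applying Hölder's inequality with exponents $m$ and $m/(m-1)$ to $Z_T^{1/m}(u_2)-Z_T^{1/m}(u_1)=\int_{u_1}^{u_2}\partial_uZ_T^{1/m}(u)\,du$, followed by Fubini's theorem, gives
\[
\E_{\theta_0}\big|Z_T^{1/m}(u_2)-Z_T^{1/m}(u_1)\big|^m\le|u_2-u_1|^{m-1}\int_{u_1\wedge u_2}^{u_1\vee u_2}\E_{\theta_0}\big|\partial_uZ_T^{1/m}(u)\big|^m\,du .
\]
Thus \eqref{raz-a} with $\alpha:=m$ (note $\alpha=m>1$, $m\ge\alpha$, and a constant bound is trivially $\le B(1+R^\alpha)$) will follow once $\E_{\theta_0}\big|\partial_uZ_T^{1/m}(u)\big|^m=m^{-m}\E_{\theta_0}\big[Z_T(u)\,|\Psi_u|^m\big]$ is shown to be bounded uniformly over $u\in\mathbb{U}_T$ and $\theta_0\in\K$.

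For this key estimate I would use that $Z_T(u)$ is the Radon--Nikodym derivative of $\P_{\theta}$ with respect to $\P_{\theta_0}$ on $\F^X_T$, so that $\E_{\theta_0}\big[Z_T(u)|\Psi_u|^m\big]=\E_{\theta}|\Psi_u|^m$. Under $\P_{\theta}$ the innovation process is $\bar B^{\theta}_t:=X_t-\int_0^t h^*\pi^{\theta}_s\,ds$, a $\P_{\theta}$-Brownian motion, and $d\bar B_t=d\bar B^{\theta}_t+\varphi^{\theta}_t\,dt$; substituting this into $\Psi_u$ the two drift integrals cancel exactly, leaving $\Psi_u=\varphi_T\int_0^T h^*\dot\pi^{\theta}_t\,d\bar B^{\theta}_t$, a $\P_{\theta}$-martingale integral. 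By the Burkholder--Davis--Gundy inequality, then Jensen's inequality for the convex map $x\mapsto x^{m/2}$ (here $m\ge2$) against the probability measure $T^{-1}dt$ on $[0,T]$, and $\varphi_T^2=1/T$,
\[
\E_{\theta}|\Psi_u|^m\le\frac{C_m}{T^{m/2}}\,\E_{\theta}\Big(\int_0^T(h^*\dot\pi^{\theta}_t)^2\,dt\Big)^{m/2}\le\frac{C_m}{T}\int_0^T\E_{\theta}\big|h^*\dot\pi^{\theta}_t\big|^m\,dt ,
\]
whence $\E_{\theta}|\Psi_u|^m\le C_m\sup_{t\ge0}\E_{\theta}\big|h^*\dot\pi^{\theta}_t\big|^m$, and everything reduces to a bound on $\sup_{t\ge0}\E_{\theta}\|\dot\pi^{\theta}_t\|^m$ that is uniform over $\theta\in\bar\varTheta$.

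For this last bound I would differentiate the identity \eqref{Jacfla} in the parameter, keeping the fixed parameter there equal to $\theta$: dividing by the increment and passing to the limit under the integral (legitimate by the very bounds established next) yields $\dot\pi^{\theta}_t=\int_0^t D\pi^{\theta}_{s,t}\big(\pi^{\theta}_s\big)\cdot\dot\Lambda^*(\theta)\pi^{\theta}_s\,ds$. Inserting this into the Jacobian formula \eqref{Jac}, and using $\big\{\pi^{\theta}_{s,t}(\cdot)\big\}_i\le1$, $\sum_k\big\{\pi^{\theta}_{s,t}(\cdot)\big\}_k=1$, the bound $\max_{i,j,k}|\varphi_{s,t}(i,j,k)|\le e^{-\gamma(\theta)(t-s)}$ and the fact that $\Lambda(\cdot)$ is continuously differentiable on $\bar\varTheta$, one obtains
\[
\big\|\dot\pi^{\theta}_t\big\|\le C_1\int_0^t\frac{e^{-\gamma(\theta)(t-s)}}{\min_k\big\{\pi^{\theta}_s\big\}_k}\,ds ,
\]
with $C_1$ uniform over $\bar\varTheta$ ($h$ being fixed and bounded). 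Then, exactly as in the step marked $\dagger$ in the proof of Lemma~\ref{roblem} (Jensen against $e^{-\gamma(\theta)(t-s)}\,ds$, followed by \eqref{ui}) and using $\inf_{\theta\in\bar\varTheta}\gamma(\theta)>0$, which holds by \eqref{serg} and continuity of \eqref{gamma}, one gets
\[
\sup_{t\ge0}\E_{\theta}\big\|\dot\pi^{\theta}_t\big\|^m\le\frac{C_1^m}{\gamma^m(\theta)}\,\sup_{s\ge0}\E_{\theta}\Big(\frac{1}{\min_k\big\{\pi^{\theta}_s\big\}_k}\Big)^m\le C_2<\infty ,
\]
uniformly over $\theta\in\bar\varTheta$, hence over $\theta=\theta_0+u\varphi_T$ with $\theta_0\in\K$. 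Chaining the displays of the last two paragraphs gives $\E_{\theta_0}\big|\partial_uZ_T^{1/m}(u)\big|^m\le C$ uniformly, which completes the verification of \eqref{raz-a}.

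The routine ingredients are the exponential rewriting of $Z_T$, the differentiation under the stochastic integral, the change of measure, and the cancellation of drifts after it. The step that requires genuine care — and is the natural candidate for the main obstacle — is the uniform $L^m$-bound on $\dot\pi^{\theta}_t$: it hinges both on extracting the workable representation of $\dot\pi^{\theta}_t$ from \eqref{Jacfla} and on taming the factor $1/\min_k\{\pi^{\theta}_s\}_k$ that the Jacobian formula \eqref{Jac} introduces, the latter being precisely what the uniform integrability \eqref{ui} — itself a consequence of the strong ergodicity assumption \eqref{serg} — is designed to supply.
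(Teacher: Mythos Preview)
Your argument is correct, and it follows a genuinely different route from the paper's. The paper never differentiates $Z_T(u)$ in $u$; instead it sets $V_t:=\big(Z_t(u_2)/Z_t(u_1)\big)^{1/m}$, writes
\[
\E_{\theta_0}\big(Z_T^{1/m}(u_1)-Z_T^{1/m}(u_2)\big)^{m}=\E_{\theta_{u_1}^T}(1-V_T)^{m},
\]
applies the It\^o formula to $V_t$ under $\P_{\theta_{u_1}^T}$, and then bounds the resulting integrals by changing measure once more (using $V_t^{m}=dP_{\theta_{u_2}^T}/dP_{\theta_{u_1}^T}$ on $\F^X_t$) and invoking the Lipschitz robustness bound \eqref{robust} for $\delta_t=h^*\pi_t^{\theta_{u_2}^T}-h^*\pi_t^{\theta_{u_1}^T}$. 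This yields $C_1|u_1-u_2|^{m}+C_2|u_1-u_2|^{2m}$, hence \eqref{raz-a} with $\alpha=m$ and a constant $B$ depending on $R$ through the quadratic term.

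Your approach trades the It\^o expansion of the ratio for a direct $u$-derivative plus a change of measure; the key observation that under $\P_\theta$ the drift terms in $\Psi_u$ cancel and leave a pure $\bar B^\theta$-integral is exactly what makes this work cleanly. The price is that you need differentiability of $\theta\mapsto\pi^\theta_t$ and the uniform moment bound $\sup_{t,\theta}\E_\theta\|\dot\pi^\theta_t\|^m<\infty$, which you correctly derive from \eqref{Jacfla}, \eqref{Jac} and \eqref{ui}. The paper's route needs only the Lipschitz estimate \eqref{robust} at this stage, so it is slightly more economical in its assumptions; on the other hand, your bound is $R$-independent (a constant suffices in place of $B(1+R^\alpha)$), and the moment control on $\dot\pi^\theta$ you establish is essentially the same estimate that underlies the finiteness of the Fisher information and reappears later in the proof.
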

\begin{proof}
For an integer $k\ge 1$, define $V_T :=
\left(\dfrac{Z_T(u_2)}{Z_T(u_1)}\right)^{1/2k}$, then
\begin{multline*}
\E_{\theta_0} \left( Z_T(u_1)^{1/2k}-Z_T(u_2)^{1/2k}
\right)^{2k} =\\
\E_{\theta_0} Z_T(u_1)\left(1-V_T\right)^{2k} =
\E_{\theta_{u_1}^T}\left(1-V_T\right)^{2k},
\end{multline*}
where the notation $\theta_{u_1}^T=\theta_0+u_1/\sqrt{T}$ is used for brevity.
Recall that
$$
\frac{Z_T(u_2)}{Z_T(u_1)}= \exp\left\{ \int_0^T
h^*\Big(\pi_t^{\theta_{u_2}^T}-\pi_t^{\theta_{u_1}^T}\Big)d\widetilde
B_t- \frac 1 2 \int_0^T\Big( h^*\pi_t^{\theta_{u_2}^T}-
h^*\pi_t^{\theta_{u_1}^T}\Big)^2dt \right\},
$$
where $\widetilde B=(\widetilde B_t)_{t\ge 0}$ is the innovation
Brownian motion  under $\P_{\theta_{u_1}^T}$. Let $\delta_t :=
h^*\pi_t^{\theta_{u_2}^T}- h^*\pi_t^{\theta_{u_1}^T}$, then by the
It\^o formula
$$
V_T = 1+ \frac{1}{2k} \int_0^T V_t \delta_t d\widetilde B_t
-\frac{1}{4k}\Big(1-\frac {1}{2k}\Big) \int_0^T V_t \delta_t^2dt,
$$
and hence
\begin{multline*}
\E_{\theta_{u_1}^T}\big(1-V_T\big)^{2k} \le 2^{2k-1}
\E_{\theta_{u_1}^T}\left(\frac{1}{2k} \int_0^T V_t \delta_t d\widetilde B_t\right)^{2k}+
\\
\shoveright{
2^{2k-1} \E_{\theta_{u_1}^T}\left( \frac{2k-1}{8 k^2}
\int_0^T V_t \delta_t^2dt
\right)^{2k} \le
}
\\
\shoveleft{
C_1 T^{k-1}\int_0^T \E_{\theta_{u_1}^T} (V_t\delta_t\big)^{2k}  dt
+ C_2 T^{2k-1}\int_0^T \E_{\theta_{u_1}^T} \big(V_t
\delta_t^2\big)^{2k}dt
=
} \\
\shoveleft{
C_1 T^{k-1}\int_0^T \E_{\theta_{u_2}^T} \big(\delta_t\big)^{2k}
dt + C_2 T^{2k-1}\int_0^T \E_{\theta_{u_2}^T} \big(
\delta_t\big)^{4k}dt\le
}
\\
C_3  (u_1-u_2)^{2k} + C_4  (u_1-u_2)^{4k},
\end{multline*}
where the bound \eqref{robust} has been used in the latter
inequality. This implies
$$
(u_1-u_2)^{-2k} \E_{\theta_0} \left(
Z_T(u_1)^{1/2k}-Z_T(u_2)^{1/2k} \right)^{2k} \le
C_3(1+R^{2k}),
$$
with a constant $C_3$, depending only on the compact $\K$ and $k$,
as required.
\end{proof}

\begin{lem}
Under the assumptions of Theorem \ref{thm}, \eqref{raz-b} of Theorem
\ref{IH-thm} holds.
\end{lem}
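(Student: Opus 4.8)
The plan is to follow the standard Ibragimov--Khasminskii treatment of the likelihood-tail bound, keeping all expectations under $\P_{\theta_0}$ so that the law of large numbers \eqref{key} is directly applicable. Put $\theta_u^T:=\theta_0+u/\sqrt T$ and $\delta_t:=h^*\pi^{\theta_u^T}_t-h^*\pi^{\theta_0}_t$. Substituting $dX_t=h^*\pi^{\theta_0}_t\,dt+d\bar B_t$ (with $\bar B$ the $\P_{\theta_0}$-innovation Brownian motion) into the formula for $L_T(\theta,\theta_0;X^T)$ and rearranging gives
$$
Z_T(u)=L_T(\theta_u^T,\theta_0;X^T)=\exp\Big\{\int_0^T\delta_t\,d\bar B_t-\tfrac12\int_0^T\delta_t^2\,dt\Big\},
$$
a Dol\'eans exponential which, since $h$ is bounded on the finite set $\s$ and $\pi^\theta_t\in\csimplex$ makes $\delta$ bounded, is a true martingale with $\E_{\theta_0}Z_T(u)=1$. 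Hence $\sqrt{Z_T(u)}=\exp\big\{\tfrac12\int_0^T\delta_t\,d\bar B_t-\tfrac18\int_0^T\delta_t^2\,dt\big\}\cdot\exp\big\{-\tfrac18\int_0^T\delta_t^2\,dt\big\}$, the first factor again being a mean-one martingale. Splitting $\E_{\theta_0}\sqrt{Z_T(u)}$ on the event $\{\int_0^T\delta_t^2\,dt\ge A\}$ (on which the second factor is $\le e^{-A/8}$) and its complement (on which Cauchy--Schwarz together with $\E_{\theta_0}Z_T(u)=1$ applies) yields, for every $A>0$, the basic inequality
$$
\E_{\theta_0}\sqrt{Z_T(u)}\ \le\ e^{-A/8}+\P_{\theta_0}\Big(\tfrac1T\int_0^T\delta_t^2\,dt<A/T\Big)^{1/2}.
$$

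Next I would prove, using \eqref{a-2} and \eqref{a-3}, the uniform quadratic-from-below estimate
$$
g(\theta_0,\theta)\ \ge\ \kappa\big(|\theta-\theta_0|^2\wedge1\big),\qquad \theta_0\in\K,\ \theta\in\bar\varTheta,
$$
for some $\kappa=\kappa(\K)>0$. For $|\theta-\theta_0|$ bounded away from $0$ this is immediate from the identifiability bound \eqref{ident} together with boundedness of $\varTheta$. For $|\theta-\theta_0|$ small one writes $g(\theta_0,\theta)=\lim_{t\to\infty}\E_{\theta_0}(h^*\pi^\theta_t-h^*\pi^{\theta_0}_t)^2$ using \eqref{toinvm}, Taylor-expands $\pi^\theta_t-\pi^{\theta_0}_t=(\theta-\theta_0)\dot\pi^{\theta_0}_t+R_t$ with $\E_{\theta_0}\|R_t\|^2\le C|\theta-\theta_0|^4$ uniformly in $t$ and $\theta_0\in\K$ (justified from \eqref{Jacfla}, \eqref{Jac} and the uniform integrability \eqref{ui}, exactly as in the proof of Lemma \ref{roblem}), and invokes the positivity of $I(\theta_0)$ and the uniform convergence $\E_{\theta_0}(h^*\dot\pi^{\theta_0}_t)^2\to I(\theta_0)$ from \eqref{a-3} to conclude $g(\theta_0,\theta)\ge\tfrac12 I(\theta_0)|\theta-\theta_0|^2$ for $|\theta-\theta_0|\le r_0$, with $r_0>0$ and $\inf_\K I>0$ independent of $\theta_0\in\K$.

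Now take $A=\tfrac12\,T\,g(\theta_0,\theta_u^T)$. Since $\theta_u^T\in\bar\varTheta$ forces $|u|\le\diam(\varTheta)\sqrt T$, the previous estimate gives $A\ge\kappa'|u|^2$ with $\kappa'=\kappa'(\K)>0$, so $e^{-A/8}\le e^{-\kappa'|u|^2/8}$. For the probability, Chebyshev's inequality applied to \eqref{key} with $|\theta_0-\theta_u^T|=|u|/\sqrt T$ gives, for every integer $k\ge1$,
$$
\P_{\theta_0}\Big(\tfrac1T\int_0^T\delta_t^2\,dt<\tfrac12 g(\theta_0,\theta_u^T)\Big)\ \le\ \Big(\tfrac2{g(\theta_0,\theta_u^T)}\Big)^{2k}\Big(\frac{C_1|u|^{4k}}{T^{3k}}+\frac{C_2}{T^{2k}}\Big)\ \le\ \frac{C_k}{|u|^{2k}},\qquad |u|\ge1,
$$
where the last bound uses $g(\theta_0,\theta_u^T)\ge\kappa(|u|^2/T\wedge1)$ and a short case split according to whether $|u|\le\sqrt T$ or $|u|>\sqrt T$, the constants $C_k$ depending only on $k$ and $\K$. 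Thus $\E_{\theta_0}\sqrt{Z_T(u)}\le e^{-\kappa'|u|^2/8}+\sqrt{C_k}\,|u|^{-k}$ for all $|u|\ge1$ (the range $|u|<1$ being trivial, since $\E_{\theta_0}\sqrt{Z_T(u)}\le1$), uniformly over $\theta_0\in\K$, $T>0$ and $u\in\mathbb{U}_T$. As the right-hand side is independent of $T$ and, $k$ being arbitrary, is $o(|u|^{-N})$ for every $N$, it is majorized by $e^{-g(|u|)}$ for a ($T$-independent) positive function $g$ with $|u|^Ne^{-g(|u|)}\to0$, which is precisely \eqref{raz-b}.

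The main obstacle is the quadratic-from-below bound on $g(\theta_0,\theta)$ near the diagonal: this is exactly where the regularity/identifiability hypotheses \eqref{a-2}--\eqref{a-3} enter, and it requires differentiating $\theta\mapsto\pi^\theta_t$ and controlling both the Taylor remainder and $\E_{\theta_0}(h^*\dot\pi^{\theta_0}_t)^2$ uniformly in $t$ and in $\theta_0\in\K$. Everything else is the routine martingale splitting of $\sqrt{Z_T(u)}$, Cauchy--Schwarz, and Chebyshev applied to the already established \eqref{key}.
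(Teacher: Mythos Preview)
Your argument is correct and shares with the paper both the crucial quadratic lower bound $g(\theta_0,\theta)\ge\kappa(\theta_0-\theta)^2$ (derived exactly as you outline, via \eqref{Jacfla}, \eqref{Jac}, \eqref{ui} and assumptions \eqref{a-2}--\eqref{a-3}) and the use of Chebyshev together with the LLN bound \eqref{key}. The route to the tail estimate, however, is genuinely different. The paper first proves the large-deviation inequality $\P_{\theta_0}\big(Z_T(u)\ge e^{-\kappa u^2/4}\big)\le C/u^{2m}$ by writing $\log Z_T(u)=\int_0^T\delta_t\,d\bar B_t-\tfrac12\int_0^T\delta_t^2\,dt$, centering the drift around $Tg(\theta_0,\theta_u^T)$, and then bounding \emph{two} probabilities: one for the stochastic integral (via Chebyshev and the Burkholder-type moment bound combined with \eqref{robust}) and one for the centered drift (via Chebyshev and \eqref{key}); only afterwards is $\E_{\theta_0}\sqrt{Z_T(u)}$ deduced by a Cauchy--Schwarz splitting on $\{Z_T(u)\ge e^{-\kappa u^2/4}\}$. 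Your Dol\'eans factorization $\sqrt{Z_T(u)}=M_T\cdot\exp\{-\tfrac18\int_0^T\delta_t^2\,dt\}$ with $M_T$ a mean-one martingale is cleaner: the stochastic-integral fluctuation is absorbed for free into $\E_{\theta_0}M_T=1$, so only the single probability $\P_{\theta_0}\big(\int_0^T\delta_t^2\,dt<A\big)$ needs to be estimated, and \eqref{key} alone suffices---no separate appeal to moment bounds on It\^o integrals is required. The price is a slightly more careful case split on $|u|\lessgtr\sqrt T$ when translating the $(2/g)^{2k}$ prefactor into a power of $|u|$, but you handle this correctly using $|u|/\sqrt T\le\diam(\varTheta)$.
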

\begin{proof}
Instead of \eqref{raz-b} we shall verify the sufficient condition
\begin{equation}
\label{instead} \P_{\theta_0}\left(Z_T(u)\ge e^{-\kappa
|u|/4}\right)\le \frac{C_1}{u^{2m}}, \quad \text{for any integer\ }
m\ge 1.
\end{equation}
Indeed, by the Cauchy-Schwartz inequality
\begin{multline*}
\E_{\theta_0} \sqrt{Z_T(u)} \le \E_{\theta_0} \one{Z_T(u)\ge
e^{-\kappa|u|/4}}\sqrt{Z_T(u)} + e^{-\kappa |u|/8}
\le \\
 \sqrt{\P_{\theta_0}\left(Z_T(u)\ge e^{-\kappa|u|/4}\right)}
+ e^{-\kappa |u|/8}\le \sqrt{\frac{C_1}{|u|^{2m}}} + e^{-\kappa
|u|/8}\le C_2 e^{-m\log|u|}
\end{multline*}
and since the latter is required to hold for {\em any} $m\ge 1$,
\eqref{raz-b} of Theorem \ref{IH-thm} holds as well.

The formula \eqref{Jacfla} (with $\theta$ and $\theta_0$  swapped)
implies
$$
h^*\pi^{\theta}_t-h^*\pi^{\theta_0}_t = \int_0^t h^*
D\pi^{\theta_0}_{s,t}\big(\pi^{\theta}_s\big)\cdot
\big(\Lambda^*(\theta)-\Lambda^*(\theta_0)\big) \pi^{\theta}_{s}ds,
$$
and as $\Lambda(\theta)$ has a continuous second derivative
$\Lambda''(\theta)$
\begin{multline*}
h^*\pi^{\theta}_t-h^*\pi^{\theta_0}_t = (\theta-\theta_0)\int_0^t
h^* D\pi^{\theta_0}_{s,t}\big(\pi^{\theta}_s\big)\cdot
\Lambda'^*(\theta_0)
\pi^{\theta}_{s}ds + \\
\frac 1 2(\theta-\theta_0)^2 \int_0^t h^*
D\pi^{\theta_0}_{s,t}\big(\pi^{\theta}_s\big)\cdot
\Lambda''^*(\tilde{\theta}) \pi^{\theta}_{s}ds \\=:
(\theta-\theta_0)\alpha_t(\theta_0,\theta)
+(\theta-\theta_0)^2\beta_t(\theta_0,\theta)
\end{multline*}
with $\tilde \theta\in [\theta_0,\theta]$. Due to the property
\eqref{Jac}, $\sup_{t\ge 0}\E_{\theta_0}
\big|\alpha_t(\theta_0,\theta)\big|^2<\infty$ and $\sup_{t\ge
0}\E_{\theta_0} \big|\beta_t(\theta_0,\theta)\big|^2<\infty$, hence
\begin{equation}\label{pochti}
g_t(\theta_0,\theta) = (\theta-\theta_0)^2
\E_{\theta_0}\big(\alpha_t(\theta_0,\theta)\big)^2 +
o\big((\theta_0-\theta)^2\big),
\end{equation}
where $o(\cdot)$ is uniform in $t\ge 0$.
Note that $\alpha_t(\theta_0,\theta_0)=h^*\dot\pi^{\theta_0}_t$ and
\begin{multline*}
\alpha_t(\theta_0,\theta_0)-\alpha_t(\theta_0,\theta) = \int_0^t
h^* D\pi^{\theta_0}_{s,t}\big(\pi^{\theta_0}_s\big)\cdot
\Lambda'^*(\theta_0)
\pi^{\theta_0}_{s}ds-\\
\shoveright
{
\int_0^t h^*
D\pi^{\theta_0}_{s,t}\big(\pi^{\theta}_s\big)\cdot
\Lambda'^*(\theta_0)
\pi^{\theta}_{s}ds=
}\\
\shoveleft{
\int_0^t h^* D\pi^{\theta_0}_{s,t}\big(\pi^{\theta_0}_s\big)\cdot
\Lambda'^*(\theta_0)
\big(\pi^{\theta_0}_{s}-\pi^\theta_s\big)ds+
}
\\
\int_0^t h^*
\Big(D\pi^{\theta_0}_{s,t}\big(\pi^{\theta_0}_s\big)-D\pi^{\theta_0}_{s,t}\big(\pi^{\theta}_s\big)\Big)\cdot
\Lambda'^*(\theta_0) \pi^\theta_sds
\end{multline*}
Now using the formula \eqref{Jac} and the bound \eqref{robust}, we
find that
$$
\sup_{t\ge
0}\E_{\theta_0}\Big(\alpha_t(\theta_0,\theta_0)-\alpha_t(\theta_0,\theta)
\Big)^2 \le C_3(\theta_0-\theta)^2.
$$
This and \eqref{pochti} imply
\begin{multline*}
g_t(\theta_0,\theta) =
(\theta-\theta_0)^2 \E_{\theta_0}\big(\alpha_t(\theta_0,\theta_0)\big)^2 + o\big((\theta_0-\theta)^2\big)=\\
(\theta-\theta_0)^2 \E_{\theta_0}\big(h^*\dot\pi^{\theta_0}_t\big)^2
+o\big((\theta_0-\theta)^2\big)
\end{multline*}
and hence, by the assumption \eqref{a-3} and \eqref{toinvm},
\begin{multline*}
g(\theta_0,\theta) =\lim_{t\to\infty}g_t(\theta_0,\theta) =
(\theta_0-\theta)^2 \lim_{t\to\infty}
\E_{\theta_0}\big(h^*\dot\pi^{\theta_0}_t\big)^2+o\big((\theta_0-\theta)^2\big)
=\\ (\theta_0-\theta)^2 I(\theta_0) +
o\big((\theta_0-\theta)^2\big).
\end{multline*}
Thus for some small enough $r>0$
\begin{equation}\label{gg}
g(\theta_0, \theta)\ge \frac 1 2 I(\theta_0) (\theta_0-\theta)^2
,\quad \forall |\theta_0-\theta|\le r,
\end{equation}
uniformly over $\theta_0\in \K$. Since $ q(r) : = \inf_{\theta_0\in
\K}\inf_{|\theta_0-\theta|\ge r} g(\theta_0,\theta) $ is strictly
positive by the assumption \eqref{a-2}, we have
$$
g(\theta_0,\theta) \ge q(r)
\frac{(\theta_0-\theta)^2}{|\varTheta|^2}, \quad \forall
|\theta_0-\theta|\ge r,
$$
uniformly over $\theta_0\in \K$, where $|\varTheta|$ denotes the
diameter of $\varTheta$. Hence, with $\kappa:= r\wedge q(r)>0$,
$$
g(\theta_0,\theta)> \kappa (\theta_0-\theta)^2, \quad \forall
\theta_0, \theta\in \K.
$$
In particular, we have $ T g(\theta_0,\theta_u^T) \ge \kappa u^2, $
whenever $u$ belongs to a compact in $\mathbb{U}_T$. Further
\begin{multline*}
\P_{\theta_0}\left(Z_T(u)\ge e^{-\kappa u^2/4}\right) = \\
\shoveleft{
\P_{\theta_0}\left( \int_0^T
h^*\Big(\pi_t^{\theta_u^T}-\pi_t^{\theta_0}\Big)d\bar B_t - \frac 1
2\int_0^T \Big(h^*\pi_t^{\theta_u^T}-h^*\pi_t^{\theta_0}\Big)^2
dt\ge -\frac{\kappa}{4} u^2
\right) =
}
\\
\shoveleft
{
\P_{\theta_0}\bigg( \int_0^T
h^*\Big(\pi_t^{\theta_u^T}-\pi_t^{\theta_0}\Big)d\bar B_t -
}
\\
\shoveright
{
 \frac 1
2\int_0^T \Big[\Big(h^*\pi_t^{\theta_u^T}-h^*\pi_t^{\theta_0}\Big)^2
-g(\theta_0,\theta_u^T) \Big]
dt\ge
-\frac{\kappa}{4}u^2 + \frac T 2
g(\theta_0,\theta_u^T)
\bigg)\le
} \\
\shoveleft{
\P_{\theta_0}\bigg(
\int_0^T h^*\Big(\pi_t^{\theta_u^T}-\pi_t^{\theta_0}\Big)d\bar B_t -
}
\\
\shoveright
{
\frac 1 2\int_0^T
\Big[\Big(h^*\pi_t^{\theta_u^T}-h^*\pi_t^{\theta_0}\Big)^2
-g(\theta_0,\theta_u^T) \Big] dt\ge \frac{\kappa}{4}u^2
\bigg) \le
}
\\
\shoveleft{
\P_{\theta_0}\left( \bigg|\int_0^T
h^*\Big(\pi_t^{\theta_u^T}-\pi_t^{\theta_0}\Big)d\bar B_t\bigg|\ge
\frac{\kappa}{8}u^2
\right)+
}
\\
\P_{\theta_0}\left(\bigg|\frac 1 2\int_0^T
\Big[\Big(h^*\pi_t^{\theta_u^T}-h^*\pi_t^{\theta_0}\Big)^2
-g(\theta_0,\theta_u^T) \Big] dt\bigg|\ge \frac{\kappa}{8}u^2
\right)
\end{multline*}
Now by the Chebyshev inequality, \eqref{robust} and using bounds for
the moments of  stochastic integral (see e.g. \cite{LSI})
\begin{multline*}
\P_{\theta_0}\left( \bigg|\int_0^T
h^*\Big(\pi_t^{\theta_u^T}-\pi_t^{\theta_0}\Big)d\bar B_t\bigg|\ge
\frac{\kappa}{8}u^2
\right)\le\\
\shoveright
{
\left(\frac{8}{u^2 \kappa}\right)^{2m} \E_{\theta_0}
\bigg|\int_0^T h^*\Big(\pi_t^{\theta_u^T}-\pi_t^{\theta_0}\Big)d\bar B_t\bigg|^{2m} \le
}
\\
\shoveright
{
\left(\frac{8}{u^2 \kappa}\right)^{2m} \big(m(2m-1)\big)^m
T^{m-1}\|h\|^{2m}\int_0^T\E_{\theta_0}
\big\|\pi_t^{\theta_u^T}-\pi_t^{\theta_0}\big\|^{2m}dt
\le
} \\
\left(\frac{8}{u^2 \kappa}\right)^{2m} \big(m(2m-1)\big)^m
T^{m-1}\|h\|^{2m} T C_4 \frac{u^{2m}}{T^m} := \frac{C_5}{u^{2m}}.
\end{multline*}
Using the estimate \eqref{key},
\begin{multline*}
\P_{\theta_0}\left(\bigg|\frac 1 2\int_0^T
\Big[\Big(h^*\pi_t^{\theta_u^T}-h^*\pi_t^{\theta_0}\Big)^2
-g(\theta_0,\theta_u^T) \Big] dt\bigg|\ge \frac{\kappa}{8}u^2
\right) \le \\
 \left(\frac{4 T}{\kappa u^2 } \right)^{2m}\E_{\theta_0}
\left( \frac 1 T\int_0^T
\Big[\big(h^*\pi_t^{\theta_u^T}-h^*\pi_t^{\theta_0}\big)^2
-g(\theta_0,\theta_u^T) \Big] dt \right)^{2m}
\le
\end{multline*}
\begin{multline*}
\left(\frac{4 T}{\kappa u^2 } \right)^{2m}
C_6\left[\bigg( \frac{u^2}{T^{3/2}} \bigg)^{2m} +
\frac{1}{T^{2m}}\right]
\le \\
\left(\frac{4 T}{\kappa u^2 } \right)^{2m} C_7|\varTheta|^{2m}\left(
\frac{u}{T} \right)^{2m} = \frac{C_8}{u^{2m}},
\end{multline*}
where we used the fact $|u/\sqrt{T}|\le|\varTheta|$ (the diameter of
$\varTheta$). This verifies \eqref{instead} and thus the statement
of the lemma.
\end{proof}

\begin{lem}
Under assumptions of Theorem \ref{thm}, the finite dimensional
distributions of $Z_T(u)$ converge weakly to those of the process
$$
Z(u)=\exp\left(\sqrt{I(\theta_0)}u\zeta-\frac 1 2 I(\theta_0)
u^2\right), \quad u\in\Real,
$$
uniformly in $\theta_0\in\K$, where $\zeta$ is a standard Gaussian
random variable (i.e. \eqref{dva} of Theorem \ref{IH-thm} holds). In
particular, $$\hat u:=\argmax_{u\in \Real}Z(u)$$ is a zero mean
Gaussian random variable with variance $1/I(\theta_0)$ (i.e.
\eqref{tri} of Theorem \ref{IH-thm} holds as well).
\end{lem}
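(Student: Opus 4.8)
The plan is to Taylor expand the log–likelihood ratio around $u=0$ and thereby reduce the whole statement to the weak convergence of a single continuous martingale. Since we work under $\P_{\theta_0}$ with $\eta=\theta_0$, the innovation $\bar B$ from \eqref{innov} satisfies $dX_t=h^*\pi^{\theta_0}_t\,dt+d\bar B_t$, so that, writing $\theta^T_u:=\theta_0+u/\sqrt T$ and $\delta^{(u)}_t:=h^*\pi^{\theta^T_u}_t-h^*\pi^{\theta_0}_t$,
\[
\log Z_T(u)=\int_0^T\delta^{(u)}_t\,d\bar B_t-\frac12\int_0^T\big(\delta^{(u)}_t\big)^2dt .
\]
First I would invoke the second order expansion already established in the proof of the previous lemma: $\delta^{(u)}_t=\frac{u}{\sqrt T}\alpha_t(\theta_0,\theta^T_u)+\frac{u^2}{T}\beta_t(\theta_0,\theta^T_u)$, where $\alpha_t(\theta_0,\theta_0)=h^*\dot\pi^{\theta_0}_t$, where $\sup_t\E_{\theta_0}\alpha_t(\theta_0,\theta)^2<\infty$ and $\sup_t\E_{\theta_0}\beta_t(\theta_0,\theta)^2<\infty$, and where $\sup_t\E_{\theta_0}\big(\alpha_t(\theta_0,\theta)-h^*\dot\pi^{\theta_0}_t\big)^2\le C(\theta-\theta_0)^2$, all uniformly over $\theta_0,\theta\in\bar\varTheta$.

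Inserting this expansion into the two integrals and using the Cauchy--Schwarz inequality together with these $L^2$ bounds (and, where convenient, \eqref{robust}), I expect to obtain, uniformly in $\theta_0\in\K$ and for each fixed $u$, the decompositions
\[
\int_0^T\delta^{(u)}_t\,d\bar B_t=u\,M_T+\rho^{(u)}_T,\qquad
\int_0^T\big(\delta^{(u)}_t\big)^2dt=u^2\,\frac1T\int_0^T\big(h^*\dot\pi^{\theta_0}_t\big)^2dt+r^{(u)}_T ,
\]
where $M_T:=\frac1{\sqrt T}\int_0^Th^*\dot\pi^{\theta_0}_t\,d\bar B_t$, and where $\E_{\theta_0}(\rho^{(u)}_T)^2\to0$ and $\E_{\theta_0}|r^{(u)}_T|\to0$ at rates polynomial in $u$ (of order $u^4/T$ and $|u|^3/\sqrt T$ respectively). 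By assumption \eqref{a-3} the empirical Fisher information $\frac1T\int_0^T(h^*\dot\pi^{\theta_0}_t)^2dt$ converges to $I(\theta_0)>0$ in $\P_{\theta_0}$–probability, so Slutsky's lemma gives $\log Z_T(u)=u\,M_T-\frac{u^2}{2}I(\theta_0)+o_{\P_{\theta_0}}(1)$. The key point is that the \emph{same} scalar $M_T$ serves every $u$: hence for any $u_1,\dots,u_k$ the vector $\big(\log Z_T(u_j)\big)_{j\le k}$ equals $\big(u_jM_T-\frac{u_j^2}{2}I(\theta_0)\big)_{j\le k}$ up to an error tending to $0$ in probability, and the finite dimensional convergence is reduced to identifying the limit law of $M_T$.

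It then remains to prove that $M_T\Rightarrow\sqrt{I(\theta_0)}\,\zeta$ with $\zeta$ standard Gaussian, uniformly in $\theta_0\in\K$. Here $N_T:=\sqrt T\,M_T=\int_0^Th^*\dot\pi^{\theta_0}_t\,d\bar B_t$ is a continuous $\F^X_t$–martingale under $\P_{\theta_0}$ with angle bracket $\langle N\rangle_T=\int_0^T(h^*\dot\pi^{\theta_0}_t)^2dt$, and, applying \eqref{a-3} along $sT$ in place of $T$, one has $\langle N\rangle_{sT}/T\to s\,I(\theta_0)$ in probability for every $s\ge0$. Consequently the rescaled martingale $s\mapsto N_{sT}/\sqrt T$ converges, by the central limit theorem for continuous (local) martingales — no Lindeberg condition is needed since the paths are continuous; see e.g. \cite{LSI} or the treatment in \cite{K04}, or, directly, the Dambis--Dubins--Schwarz time change $N_T=W_{\langle N\rangle_T}$ combined with Brownian scaling — to $\sqrt{I(\theta_0)}\,W$ for a standard Brownian motion $W$, and in particular $M_T\Rightarrow\sqrt{I(\theta_0)}\,W_1$. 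The uniformity in $\theta_0\in\K$ is inherited from the uniformity of the bracket convergence supplied by \eqref{a-3} and from the uniform-in-$\theta_0$ remainder estimates above, exactly as in \cite[Ch.~2]{K04}. Combining this with the reduction of the previous paragraph and the continuity of $x\mapsto e^x$, the finite dimensional distributions of $Z_T$ converge, uniformly over $\theta_0\in\K$, to those of $Z(u)=\exp\big(\sqrt{I(\theta_0)}\,u\zeta-\frac12I(\theta_0)u^2\big)$, which is \eqref{dva}.

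Finally, for \eqref{tri}: the map $u\mapsto Z(u)$ is a.s. continuous and, since $I(\theta_0)>0$, $Z(u)\to0$ as $|u|\to\infty$, so $Z\in\mathbf C_0$; its logarithm is the strictly concave parabola $u\mapsto\sqrt{I(\theta_0)}\,u\zeta-\frac12I(\theta_0)u^2$, attaining its maximum at the unique point $\hat u=\zeta/\sqrt{I(\theta_0)}$, which is a zero mean Gaussian variable with variance $1/I(\theta_0)$. The only genuinely delicate step will be the martingale central limit theorem, and in particular making its conclusion uniform over $\theta_0\in\K$; by contrast, the negligibility of all the expansion remainders is routine once the preliminary lemmas \eqref{robust}, \eqref{Jac}, \eqref{Jacfla} and the moment bound \eqref{ui} are in hand.
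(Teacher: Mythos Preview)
Your proof is correct and follows essentially the same route as the paper: Taylor-expand $h^*\pi^{\theta^T_u}_t-h^*\pi^{\theta_0}_t$ via \eqref{Jacfla}, \eqref{Jac} and \eqref{ui}, reduce the quadratic variation to $u^2 I(\theta_0)$ through \eqref{a-3}, and then invoke the CLT for stochastic integrals (the paper cites Proposition~1.20 in \cite{K04}). The only organisational difference is that you first isolate the common martingale $M_T=T^{-1/2}\int_0^T h^*\dot\pi^{\theta_0}_t\,d\bar B_t$ and apply the CLT to it, which makes the joint convergence of $(Z_T(u_1),\dots,Z_T(u_k))$ explicit, whereas the paper applies the CLT directly to $\int_0^T\delta^{(u)}_t\,d\bar B_t$ and passes over the multidimensional statement in one line.
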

\begin{proof}
Recall the definition of the process $Z_T(u)$
$$
Z_T(u)= \exp\left\{\int_0^T
\big(h^*\pi^{\theta_u^T}_t-h^*\pi^{\theta_0}_t\big)d\bar B_t -\frac
1 2 \int_0^T \big(h^*\pi^{\theta_u^T}_t-h^*\pi^{\theta_0}_t\big)^2
dt\right\}.
$$
Using \eqref{Jacfla}, \eqref{Jac} and \eqref{ui}, similarly to the
proof of \eqref{gg}, we have
\begin{multline*}
\E_{\theta_0}\big|\pi^{\theta_0+\delta}_t-\pi_t^{\theta_0}-
\delta\dot \pi^{\theta_0}_t\big|
 =\\
\E_{\theta_0}\bigg|\int_0^t
 D\pi^{\theta_0}_{s,t}\big(\pi^{\theta_0+\delta}_s\big)\cdot \big(\Lambda^*(\theta_0+\delta)-\Lambda^*(\theta_0)\big)
\pi^{\theta_0+\delta}_{s}ds - \\
\delta \int_0^t
 D\pi^{\theta_0}_{s,t}\big(\pi^{\theta_0}_s\big)\cdot \Lambda'^*(\theta_0)
\pi^{\theta_0}_{s}ds\bigg| =  o(\delta^2)
\end{multline*}
uniformly in $t\ge 0$. Hence
$$
\int_0^T
\E_{\theta_0}\left(h^*\pi^{\theta_u^T}_t-h^*\pi_t^{\theta_0}-
\frac{u}{\sqrt{T}}h^*\dot \pi^{\theta_0}_t\right)^2dt
\xrightarrow{T\to\infty} 0,
$$
which implies
$$
\E_{\theta_0}\left|\int_0^T
\Big(h^*\pi^{\theta_u^T}_t-h^*\pi_t^{\theta_0}\Big)^2 dt -
\frac{u^2}{T}
\int_0^T\big(h^*\dot\pi^{\theta_0}_t\big)^2dt\right|\xrightarrow{T\to\infty}
0
$$
and in turn, by \eqref{a-3},
$$
\P_{\theta_0}-\lim_{T\to\infty}\int_0^T
\Big(h^*\pi^{\theta_u^T}_t-h^*\pi_t^{\theta_0}\Big)^2 dt =u^2
I(\theta_0),
$$
uniformly on compacts $\K\in \varTheta$. By the CLT for stochastic
integrals (see e.g. Proposition 1.20 in \cite{K04}),
$$
\int_0^T \big(h^*\pi^{\theta_u^T}_t-h^*\pi^{\theta_0}_t\big)d\bar
B_t
$$
converges weakly to a Gaussian random variable with zero mean and
variance $u^2I(\theta_0)$, uniformly on compacts $\theta_0\in \K$.
This implies the weak convergence of the one (and all finite) dimensional
distributions of $Z_T(u)$ to $Z(u)$. By \eqref{robust},
$I(\theta_0)$ is finite and  assuming that it is positive uniformly
on compacts in $\varTheta$, the maximizer of $Z(u)$ is unique and
equals $\hat u = \zeta/\sqrt{I(\theta_0)}$ as claimed.
\end{proof}

\section{An example} \label{sec-ex}
In this section we demonstrate with a simple example, how  the
conditions of Theorem \ref{thm} can be verified explicitly. Let
$S_t$ be a Markov chain with values in $\{0,1\}$, initial
distribution $\P(S_0=1)=\nu$ and transition matrix
$$
\Lambda = \theta
\begin{pmatrix}
-1 & \ \ 1\\
\ \ 1 & -1
\end{pmatrix},
$$
where $\theta$ is an unknown parameter, which controls the switching
rate of the chain. Suppose, it is known that the actual value of
this parameter $\theta_0$ lies within an interval
$\varTheta:=(\theta_{\min},\theta_{\max})\subset \Real_+$, $\theta_{\max}>\theta_{\min}>0$. The chain
is observed in the Gaussian white noise channel, i.e.
$$
X_t = \int_0^t S_r dr + B_t, \quad t\ge 0.
$$
The filtering process  $\pi^\theta_t=\P(S_t=1|\F^X_t)$ in this case
satisfies the SDE:
\begin{equation}\label{pipi}
d\pi^\theta_t = \theta(1-2\pi^\theta_t)dt +
\pi^\theta_t(1-\pi^\theta_t)\big(dX_t - \pi^\theta_tdt\big), \quad
t\in[0,T]
\end{equation}
started from $\pi^\theta_0=\nu$. The likelihood function is
$$
L_T(\theta;X^T) = \exp\left\{\int_0^T \pi^\theta_t
dX_t-\frac{1}{2}\int_0^T \big(\pi^\theta_t\big)^2 dt\right\}.
$$
The MLE of $\theta$ is found by maximizing $L_T(\theta;X^T)$  over
$\theta\in \bar\varTheta=[\theta_{\min}, \theta_{\max}]$. The
condition \eqref{a-1} is  satisfied and we should check  \eqref{a-2}
and \eqref{a-3}.

\subsection{The identifiability condition \eqref{a-2}}

Let $(\check \pi^{\theta_0}_t,\check \pi^\theta_t)$ be a stationary
(under $\P_{\theta_0}$) copy of the process defined by
\begin{align*}
 d\check\pi^{\theta_0}_t &= \theta_0(1-2\check\pi^{\theta_0}_t)dt +
\check\pi^{\theta_0}_t(1-\check\pi^{\theta_0}_t) d\bar B_t, \\
 d\check\pi^\theta_t & = \theta(1-2\check\pi^\theta_t)dt +
\check\pi^\theta_t(1-\check\pi^\theta_t)\big( d\bar B_t +(\check\pi^{\theta_0}_t- \check\pi^\theta_t)dt\big)
\end{align*}
where $d\bar B_t = dX_t - \check\pi^{\theta_0}_tdt$ is the corresponding innovation Brownian motion
with respect to $\F^X_t\vee \sigma\{\check\pi^{\theta_0}_0\}$.
Introduce an auxiliary process $q^\theta_t$, solving the equation
$$
dq^\theta_t = \theta(1-2q^\theta_t)dt +
q^\theta_t\big(1-q^\theta_t\big)d\bar B_t,
$$
subject to $q^\theta_0 = \check \pi^\theta_0$.
Heuristically, it is clear that if $|\check\pi^{\theta_0}_t- \check\pi^\theta_t|$ is small
on average for $|\theta- \theta_0|\ge r>0$ , then the distribution of $\check\pi^\theta_t$ should be close
to the distribution of $q^\theta_t$. But the latter satisfies an It\^o equation, corresponding
to the filtering problem for the signal with the switching rate $\theta$. This, in turn, would
imply that the signals with well separated $\theta$ and $\theta_0$ can be filtered with the same
steady state error. The latter can be argued false in our case and hence
$\check\pi^{\theta_0}_t$ and $\check\pi^\theta_t$  cannot be close. The rest is the
precise realization of this heuristics.

The difference
$\Delta_t:=\check \pi^\theta_t-q^\theta_t$ solves
$$
d\Delta_t = -2 \theta \Delta_t dt + \alpha_tdt + \Delta_t
\big(1-\check \pi_t^\theta-q^\theta_t\big)d\bar B_t,
\quad \Delta_0=0
$$
where $\alpha_t = \check \pi_t^\theta\big(1-\check
\pi_t^\theta\big)(\check \pi_t^{\theta_0}-\check \pi_t^\theta)$ and
hence $V_t = \E_{\theta_0}\Delta_t^2$ satisfies
$$
\dot V_t = -4 \theta V_t  + 2\E_{\theta_0} \Delta_t \alpha_t +
\E_{\theta_0} \Delta_t^2 \big(1-\check \pi_t^\theta-q^\theta_t\big)^2
\le C_1 V_t  + C_2 \sqrt{\E_{\theta_0} \alpha^2_t}.
$$
This implies
$$
\E_{\theta_0} \big(\check \pi^\theta_t-q^\theta_t\big)^2 \le
\frac{C_2}{C_1}\big(e^{C_1 t}-1\big) \sqrt{\E_{\theta_0}
\big(\check\pi^{\theta_0}_t-\check \pi^\theta_t\big)^2}
$$
and hence
\begin{multline}\label{vot-tak}
\E_{\theta_0} \big(\check \pi^{\theta_0}_t - q^\theta_t\big)^2 \le
2\E_{\theta_0} \big(\check \pi^{\theta_0}_t - \check
\pi^\theta_t\big)^2 +
2\E_{\theta_0} \big(\check \pi^\theta_t - q^\theta_t\big)^2 \le \\
2\sqrt{\E_{\theta_0} \big(\check \pi^{\theta_0}_t - \check
\pi^\theta_t\big)^2}+
2\frac{C_2}{C_1}\big(e^{C_1 t}-1\big) \sqrt{\E_{\theta_0} \big(\check\pi^{\theta_0}_t-\check \pi^\theta_t\big)^2}\le\\
\rho(t) \sqrt{\E_{\theta_0} \big(\check\pi^{\theta_0}_t-\check
\pi^\theta_t\big)^2}
\end{multline}
with $\rho(t):=2+2\frac{C_2}{C_1}\big(e^{C_1 t}-1\big)>0$ for any
$t>0$ (regardless of the sign of $C_1$). On the other hand, by the Jensen inequality
$$
\Big|\E_{\theta_0} \big(\check \pi^{\theta_0}_t\big)^2 -
\E_{\theta_0}\big(q^\theta_t\big)^2\Big|\le 2\E_{\theta_0}\big|
\check \pi^{\theta_0}_t - q^\theta_t\big|\le 2
\sqrt{\E_{\theta_0}\big( \check \pi^{\theta_0}_t -
q^\theta_t\big)^2}.
$$
The distribution of $q^\theta_t$ converges weakly to the stationary
distribution of  $\pi^{\theta}_t$  under $\P_\theta$ (and not $\P_{\theta_0}$!), i.e. to the
distribution of $\check\pi^\theta_t$ under $\P_\theta$. Thus for any
$\eps>0$ we may choose $T(\eps)$ large enough such that
$$
\big|\E_{\theta_0}\big(q^\theta_t\big)^2 -
\E_\theta\big(\check\pi^\theta_t\big)^2\big|\le \eps, \quad \forall
t\ge T(\eps).
$$
The distributions of $\check \pi^{\theta_0}_t$ (under
$\P_{\theta_0}$) and of $\check \pi^{\theta}_t$ (under
$\P_{\theta}$) can be found explicitly by solving the corresponding
Kolmogorov equations (see e.g. Section 15.4, \cite{LSII}) and
$\E_{\theta_0}\big(\check \pi^{\theta_0}_t\big)^2\ne
\E_\theta\big(\check \pi^\theta_t\big)^2$ whenever $\theta\ne
\theta_0$, is checked by direct calculation. Moreover,
$\E_\theta\big(\check \pi^\theta_t\big)^2$ is easily seen to be
continuous in $\theta$ on $[\theta_{\min},\theta_{\max}]$, and thus
$$
g(r):=\inf_{{\theta_0}\in \K}\inf_{\theta:|\theta-{\theta_0}|\ge
r}\big|\E_{\theta_0}\big(\check\pi^{\theta_0}_t\big)^2 -
\E_\theta\big(\check\pi^\theta_t\big)^2\big|>0.
$$
But then by \eqref{vot-tak}, for any $|{\theta_0}-\theta|\ge r>0$
\begin{multline*}
\E_{\theta_0} \big(\check \pi^{\theta_0}_t - \check
\pi^\theta_t\big)^2 \ge\frac{ \Big|\E_{\theta_0} \big(\check
\pi^{\theta_0}_t\big)^2 -
\E_{\theta_0}\big(q^\theta_t\big)^2\Big|^4} {16 \rho^2(t)}
 \ge \\
\frac{ 1/8\Big|\E_{\theta_0} \big(\check \pi^{\theta_0}_t\big)^2 -
\E_\theta\big(\check \pi^\theta_t\big)^2\Big|^4- \eps^4}
{16\rho^2\big(T(\eps)\big)} \ge \frac{ \frac{1}{8}g^4(r)- \eps^4}
{16 \rho^2\big(T(\eps)\big)}.
\end{multline*}
The required property  \eqref{ident} now follows by arbitrariness of
$\eps$ and positiveness of $\rho(t)$, $t\ge 0$.

\begin{rem}
The coupling argument, used in this example, is applicable in the
general $d>2$ case, namely $\E_{\theta_0}\big(h^*\check\pi^{\theta_0}_t-h^*\check\pi^\theta_t\big)^2$
can be similarly shown to be lower bounded by a quantity, proportional to
$\big|\E_{\theta_0}f\big(h^*\check\pi^{\theta_0}_t\big)-\E_{\theta}f\big(h^*\check\pi^\theta_t\big)\big|$
with $f(x)=x^2$ or $f(x)=x$, etc. The latter means that the stationary laws of two $d$-dimensional
diffusions are to be studied, instead of the law of $2d$-dimensional diffusion $(\pi^{\theta_0}_t,\check\pi^\theta_t)$.
In particular the identifiability follows, if one is able to show that the
laws of $h^*\check \pi^{\theta_0}_t$ under $\P_{\theta_0}$ and $h^*\check
\pi^\theta_t$ under $\P_\theta$ have  different moments, uniformly for separated
$\theta$ and ${\theta_0}$. In the example, this was
possible due to explicit expression available for the probability
density of the filtering process when $d=2$.
\end{rem}
\subsection{The regularity condition \eqref{a-3}}
The derivative $\dot \pi^{\theta_0}_t$ satisfies the equation
\begin{multline}
\label{der} d\dot \pi^{\theta_0}_t = (1-2\pi^{\theta_0}_t)dt -
\Big(2\theta_0+\pi^{\theta_0}_t(1-\pi^{\theta_0}_t)\Big)\dot
\pi^{\theta_0}_tdt +
\\
(1-2\pi^{\theta_0}_t)\dot \pi^{\theta_0}_td\bar B_t, \quad \dot
\pi^{\theta_0}_0=0.
\end{multline}
and hence the pair $( \pi^{\theta_0}_t, \dot \pi^{\theta_0}_t)$ is a
Markov-Feller process. The formula \eqref{Jacfla} yields
$$
\dot\pi^{\theta_0}_t =-2 \int_0^t
D\pi^{\theta_0}_{s,t}\big(\pi^{\theta_0}_s\big)
\pi^{\theta_0}_{s}ds,
$$
and, in turn,  the bound \eqref{Jac} implies
\begin{equation}\label{bsup}
\sup_{t\ge 0}\E_{\theta_0}\big(\dot\pi^{\theta_0}_t\big)^2<\infty.
\end{equation}
This guarantees  existence of at least one invariant measure for $(
\pi^{\theta_0}_t, \dot \pi^{\theta_0}_t)$ (e.g. Theorem 2.1, Ch III,
\cite{Khas}). The uniqueness of this measure as well as the limits,
required in \eqref{a-3}, are deduced by standard arguments from
\eqref{bsup} and the fact that the distance between any  two
solutions of \eqref{der}, started from distinct initial conditions,
converges to zero with positive asymptotic exponential rate. Let $(
\check\pi^{\theta_0}_t, \dot {\check\pi}^{\theta_0}_t)$ be the
stationary pair, then \eqref{der} implies
\begin{multline*}
 \dot {\check \pi}^{\theta_0}_t = \dot {\check \pi}^{\theta_0}_0 e^{-2\theta_0 t}
+\int_0^t e^{-2\theta_0(t-s)}(1-2\check \pi^{\theta_0}_s)ds
-\\
 \int_0^t e^{-2\theta_0(t-s)}\check\pi^{\theta_0}_s(1-\check\pi^{\theta_0}_s)\dot {\check\pi}^{\theta_0}_s ds
+ \int_0^t e^{-2\theta_0(t-s)}
(1-2\check\pi^{\theta_0}_s)\dot{\check \pi}^{\theta_0}_sd\bar B_s,
\end{multline*}
and hence
\begin{multline*}
\E_{\theta_0}\left(\int_0^t e^{-2\theta_0(t-s)}(1-2\check
\pi^{\theta_0}_s)ds\right)^2\le
 4 (1+  e^{-4\theta_0 t})I(\theta_0)
 +\\
\frac{4}{2\theta_0}\int_0^t
e^{-2\theta_0(t-s)}\frac{1}{16}I(\theta_0) ds + 4 \int_0^t
e^{-4\theta_0(t-s)} I(\theta_0)ds\le C_1 I(\theta_0),
\end{multline*}
with a constant $C_1>0$. The process $\zeta_t := \int_0^t
e^{-2\theta_0(t-s)}(1-2\check \pi^{\theta_0}_s)ds$ is the solution
of the equation
$$
\dot{\zeta}_t = -2\theta_0 \zeta_t + (1-2\check \pi^{\theta_0}_t),
\quad \zeta_0=0.
$$
Elementary calculations show that
$$
\lim_{t\to\infty}\E_{\theta_0}(\zeta_t)^2 =
\frac{2\E_{\theta_0}\big(\check
\pi^{\theta_0}_0-1/2\big)^2}{4\theta^2_0},
$$
which is positive for any positive $\theta_0$. Hence
$\inf_{\theta_0\in \bar \varTheta}I(\theta_0)>0$  as required in
\eqref{a-3}.

\section{A discussion}\label{sec-dis}

The  result stated in Theorem \ref{thm} is extendable to the
vector parameter space $\varTheta$, since the key properties such as
\eqref{pr}, \eqref{Jac} and \eqref{Jacfla} do not depend on the
dimension of $\theta$. On the other hand, the setting where $h$
depends on the parameter, seems to be more delicate  and would
require additional effort, mainly because the  formula analogous to
\eqref{Jacfla} in this case is more intricate and involves
Skorokhod anticipating integrals (see Proposition 4.1 in
\cite{ChVH}). As was mentioned before, the requirement \eqref{a-1}
is essential and it is not obvious whether the claimed results hold
under weaker form of ergodicity of the chain $S$ (especially the
convergence of moments). The requirements \eqref{a-2} and
\eqref{a-3} seem to be quite natural, though it is not clear at what
level of generality they can be verified in terms of the model data.

\appendix
\section{An LLN for processes with short correlation}\label{sec-app}
The following is a version of Lemma 2.1 in \cite{Kh66}, adapted to
our purposes. The proof mostly follows the lines of the original proof.
\begin{lem}\label{khas-lem}
Let $\Phi(t;b)$ (we shall also write $\Phi(t)$ for brevity) be a stochastic
process with real values, depending on a  parameter $b>0$, such that
$\E \Phi(t;b)=0$ and
\begin{equation}
\label{mom} \E \big|\Phi(t;b)\big|^m \le Cb^m, \quad m=1,2,...
\end{equation}
for a  constant $C$, possibly depending on
$m$. Let $0\le t_1\le t_2\le ...\le t_n$, and assume that for any
$i\in \{1,...,n\}$
\begin{equation}
\label{kakmix} \left|\E \Phi(t_1)...\Phi(t_n) -
\E\Phi(t_1)...\Phi(t_i)\E \Phi(t_{i+1})...\Phi(t_n)\right|\le C_n
b^{n}\alpha(t_{i+1}-t_i)
\end{equation}
with a nonnegative decreasing function $\alpha(\tau)$, such that
$$
A_n := \int_0^\infty \tau^{n-1}\alpha(\tau)d\tau <\infty,\quad
n=1,...,k.
$$
Then
$$
\int_0^T...\int_0^T \big|\E
\Phi(s_1)...\Phi(s_{2k})\big|ds_1...ds_{2k}\le c_{2k} b^{2k}T^k,
$$
where $c_{2k}$ are constants, depending only on $k$ and
$A_1,...,A_k$.
\end{lem}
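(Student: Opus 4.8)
The plan is to estimate the $2k$-fold integral by a combinatorial decomposition of the index set $\{s_1,\dots,s_{2k}\}$ according to the gaps between consecutive (ordered) time points, in the spirit of the original argument of \cite{Kh66}. By symmetry of the integrand it suffices to integrate over the ordered simplex $0\le s_1\le\cdots\le s_{2k}\le T$ and multiply by $(2k)!$. On this region, write the consecutive gaps $\tau_j:=s_{j+1}-s_j$ for $j=1,\dots,2k-1$. The idea is: whenever some gap $\tau_j$ is large, the mixing bound \eqref{kakmix} lets us split the expectation $\E\Phi(s_1)\cdots\Phi(s_{2k})$ across that gap at the cost of $C\, b^{2k}\alpha(\tau_j)$; iterating, we factor the correlation function into blocks separated by the largest gaps, and each isolated block contributes only through \eqref{mom} and an integrable tail $A_n$.

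The key steps, in order. First, fix the ordering and, at a given point $(s_1,\dots,s_{2k})$, let $j^\star$ be an index maximizing $\tau_{j^\star}$; then $\tau_{j^\star}\ge (s_{2k}-s_1)/(2k-1)$, so in particular the largest gap is comparable to the total spread. Second, apply \eqref{kakmix} at $i=j^\star$: either bound $\big|\E\Phi(s_1)\cdots\Phi(s_{2k})\big|$ directly via $\big|\E\Phi(s_1)\cdots\Phi(s_{j^\star})\big|\cdot\big|\E\Phi(s_{j^\star+1})\cdots\Phi(s_{2k})\big|$ plus the error $C_{2k}b^{2k}\alpha(\tau_{j^\star})$, and recurse on each of the two shorter product-moments (each of which is again bounded, via \eqref{mom} and H\"older, by $Cb^{\ell}$ for a block of length $\ell$, or split further). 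Carrying this recursion out produces, for each way of partitioning $\{1,\dots,2k\}$ into consecutive blocks $B_1,\dots,B_p$, a bound of the form $C\,b^{2k}\prod_{r=1}^{p-1}\alpha(\text{gap between }B_r,B_{r+1})$, where the within-block moments have all been absorbed into the constant using $|\E\prod_{s\in B_r}\Phi(s)|\le C b^{|B_r|}$. Third, integrate: for a fixed block structure, integrate out the $2k-p$ "internal" times freely over $[0,T]$ — contributing at most $T^{2k-p}$ — while the $p-1$ gap variables are integrated against $\alpha$, contributing $\prod A_{?}\le (A_1)^{p-1}\cdot(\text{stuff})$; more carefully, grouping the internal times with the block to their left and integrating the left endpoints of the $p$ blocks over $[0,T]$ gives a factor $T^p/p!$ for the block-anchor positions, so the total is $\lesssim b^{2k}\,T^{p}\cdot\prod(\text{finite }\alpha\text{-integrals})$. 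Summing over the finitely many block structures, the dominant term is the one with the fewest blocks for which the power of $T$ is still controlled — and the crucial bookkeeping is that blocks of size $1$ force an extra power of $b$ but no gain in $T$, while the arithmetic of moments forces $p\le k$ in the $T$-exponent of the leading term, yielding $b^{2k}T^k$.

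The main obstacle will be the last point: making the power-counting precise so that the exponent of $T$ never exceeds $k$. The mechanism is that $\Phi$ has mean zero, so a block consisting of a single index contributes $\E\Phi(s)=0$ and drops out entirely; hence every surviving block has size $\ge 2$, giving at most $k$ blocks, and it is the $p\le k$ blocks — not the $2k$ individual times — whose anchor positions are integrated freely over $[0,T]$, while the remaining $2k-p$ time differences inside and between blocks are all tied down by integrable $\alpha$-factors or by being internal to a block of bounded diameter weighted by a moment bound. I would organize this as an induction on $k$: isolate the largest gap, split via \eqref{kakmix}, and note that the two resulting factors are product-moments of $\Phi$ at $j^\star$ and $2k-j^\star$ points whose own spreads are smaller; the induction hypothesis (suitably generalized to arbitrary even and odd numbers of points, with the mean-zero cancellation handling the odd singletons) then closes the estimate with the constant $c_{2k}$ depending only on $k$ and $A_1,\dots,A_k$. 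The remaining steps — the H\"older estimate $|\E\prod_{s\in B}\Phi(s)|\le Cb^{|B|}$ from \eqref{mom}, and the elementary $\int_0^T\cdots$ bounds — are routine.
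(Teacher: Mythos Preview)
Your outline is essentially the paper's argument: induction on $k$, split the product moment at a large gap via \eqref{kakmix}, and use $\E\Phi=0$ to kill singleton blocks. Two technical choices in the paper make the bookkeeping cleaner than in your sketch, and are worth adopting.

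First, rather than splitting at the overall largest gap $\tau_{j^\star}$, the paper takes the maximum only over the gaps $s_{j_{2\ell+1}}-s_{j_{2\ell}}$, $\ell=1,\dots,n$, i.e.\ at an \emph{even} ordered index. This guarantees both subproducts have even length, so the induction hypothesis applies directly and you never need the ``suitably generalized to odd numbers of points'' statement you allude to---that generalization is where the real work would otherwise hide.

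Second, for the error term $Cb^{2n+2}\alpha(s_{j_{2r+1}}-s_{j_{2r}})$, the paper observes that the \emph{same} difference is also bounded by $cb^{2n+2}\alpha(s_{j_{2n+2}}-s_{j_{2n+1}})$ (apply \eqref{kakmix} at the last position and use $\E\Phi=0$), and hence by $\alpha$ of the maximum of these two gaps. Since that maximum dominates the average of the $n+1$ gaps $\tau_2,\tau_4,\dots,\tau_{2n},\tau_{2n+1}$, integrating the error over $[0,T]^{2n+2}$ costs exactly $A_{n+1}T^{n+1}$, with the remaining $n$ odd gaps and the anchor $s_{j_1}$ supplying the $T^{n+1}$. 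Your version, bounding $\tau_{j^\star}\ge (s_{2k}-s_1)/(2k-1)$, would instead produce $A_{2k-1}$, which is not among the constants the lemma allows.
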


\begin{proof}
The lemma is proved by induction. The bound \eqref{kakmix} implies
$$
\int_0^T \int_0^T \big|\E \Phi(s_1)\Phi(s_2)\big|ds_1ds_2 \le 2C_2
b^2\int_0^T\int_0^t   \alpha(s)dsdt \le 2C_2 b^2 A_1 T,
$$
and hence the claim holds for $k=1$. Suppose now that the lemma has
been proved for $k\le n$. Let $s=(s_1,...,s_{2n+2})\in
[0,T]^{2n+2}$. Let $j_1,...,j_{2n+2}$ be the permutation of the
indices such that $ s_{j_1}\le s_{j_2}\le ...\le  s_{j_{2n+2}} $ and
let $r=r(s)$ be any index for which
$$
\max_{\ell=1,...,n}\big(s_{j_{2\ell+1}}-s_{j_{2\ell}}\big)=s_{j_{2r+1}}-s_{j_{2r}}.
$$
From \eqref{kakmix} it follows that
\begin{multline}\label{2.5}
\Big| \E \Phi(s_1)...\Phi(s_{2n+2})-\E
\Phi(s_1)...\Phi(s_{j_{2r}})\E
\Phi(s_{j_{2r+1}})...\Phi(s_{j_{2n+2}}) \Big|\le\\ C
b^{2n+2}\alpha(s_{j_{2r+1}}-s_{j_{2r}})
\end{multline}
Since $\Phi(t)$ is zero mean, \eqref{kakmix} implies
\begin{align*}
&\big|\E \Phi(s_1)...\Phi(s_{2n+2}) \big| \le C b^{2n+2}\alpha(s_{j_{2n+2}}-s_{j_{2n+1}}) \\
&\big|\E \Phi(s_{j_{2r+1}})...\Phi(s_{j_{2n+2}})\big|\le C
b^{2n+2-2r}\alpha(s_{j_{2n+2}}-s_{j_{2n+1}}).
\end{align*}
Moreover, by the H\"older inequality and \eqref{mom}, $ \E\big|
\Phi(s_1)...\Phi(s_{j_{2r}})\big| \le C b^{2r}, $ and thus
(hereafter $c$ is a constant, possibly depending on $n$, whose value
may differ from line to line)
\begin{multline*}
\Big| \E \Phi(s_1)...\Phi(s_{2n+2})-\E
\Phi(s_1)...\Phi(s_{j_{2r}})\E
\Phi(s_{j_{2r+1}})...\Phi(s_{j_{2n+2}})
\Big|\le \\
c  b^{2n+2}\alpha(s_{j_{2n+2}}-s_{j_{2n+1}}).
\end{multline*}
As $\alpha(\tau)$ decrease, the latter and \eqref{2.5} imply
\begin{multline}\label{2.6}
\Big| \E \Phi(s_1)...\Phi(s_{2n+2})-\E
\Phi(s_1)...\Phi(s_{j_{2r}})\E
\Phi(s_{j_{2r+1}})...\Phi(s_{j_{2n+2}})
\Big|\le \\
2c b^{2n+2}\alpha\big(\max(s_{j_{2n+2}}-s_{j_{2n+1}},
s_{j_{2r+1}}-s_{j_{2r}})\big)
\end{multline}
By the definition of $r$
\begin{multline*}
\sigma(s):= \max(s_{j_{2n+2}}-s_{j_{2n+1}}, s_{j_{2r+1}}-s_{j_{2r}})> \\
\frac{1}{n+1}\Big(\sum_{\ell=1}^n(s_{j_{2\ell+1}}-s_{j_{2\ell}})+(s_{j_{2n+2}}-s_{j_{2n+1}})\Big),
\end{multline*}
and as $\alpha(\tau)$ decreases, we have
\begin{multline*}
\int_0^T ...\int_0^T \alpha\big(\sigma(s)\big)ds\le \\ (2n+2)!
\idotsint\limits_{0<s_1<...<s_{2n+2}<T}
 \alpha
\Big\{
\frac{1}{n+1}\Big(\sum_{\ell=1}^n(s_{j_{2\ell+1}}-s_{j_{2\ell}})+(s_{j_{2n+2}}-s_{j_{2n+1}})\Big)
\Big\}ds.
\end{multline*}
Using the formula
$$
\int_0^\infty ...\int_0^\infty
\alpha(t_1+...+t_{n+1})dt_1...dt_{n+1}=\frac{1}{n!}\int_0^\infty u^n
\alpha(u)du,
$$
the following estimate is obtained:
\begin{equation}\label{2.9}
\int_0^T ...\int_0^T \alpha\big(\sigma(s)\big)ds\le c_n A_{n+1}
T^{n+1},
\end{equation}
where $c_n$ depends only on $n$. Further
\begin{multline*}
\int_0^T ...\int_0^T \big|\E \Phi(s_{j_1})...\Phi(s_{j_{2r}})\big|\big|\E \Phi(s_{j_{2r+1}})...\Phi(s_{j_{2n+2}})\big|ds\le \\
\sum_{\ell=1}^n
\int_0^T ...\int_0^T \big|\E \Phi(s_{j_1})...\Phi(s_{j_{2\ell}})\big|\big|\E \Phi(s_{j_{2\ell+1}})...\Phi(s_{j_{2n+2}})\big|ds\le\\
 (2n+2)! \sum_{\ell=1}^n
\int_0^T ...\int_0^T \big|\E \Phi(s_{1})...\Phi(s_{2\ell})\big|ds_1...ds_{2\ell}\times \\
\int_0^T ...\int_0^T\big|\E
\Phi(s_{2\ell+1})...\Phi(s_{2n+2})\big|ds_{2\ell+1}ds_{2n+2}
\end{multline*}
By the induction hypothesis, each of the terms in the sum on the
right hand side are bounded by $c_{2n} b^{2n+2} T^{n+1}$ and hence
using \eqref{2.6}, \eqref{2.9} we obtain
$$
\int_0^T...\int_0^T \big|\E
\Phi(s_1)...\Phi(s_{2n+2})\big|ds_1...ds_{2n+2} \le c_{2n+2}
b^{2n+2}T^{n+1}.
$$
\end{proof}


\end{document}